\def\dto{\dashrightarrow}
\def\hto{\hookrightarrow}
\def\lto{\longrightarrow}
\def\X{\textbf{X}}
\def\t{\textbf{t}}
\def\im{\textnormal{im}}
\def\min{\textnormal{min}}
\def\res{\textnormal{Res}}
\def\codim{\textnormal{codim}}
\def\coker{\textnormal{coker}}
\newcommand\Spec{\textnormal{Spec}}
\newcommand\Proj{\textnormal{Proj}}
\newcommand\BiProj{\textnormal{BiProj}}
\newcommand\MProj{\textnormal{MultiProj}}
\newcommand\length{\textnormal{length}}
\def\div{\textnormal{div}}
\newcommand\gen[1]{\left\langle#1\right\rangle}
\def\PP{\mathbb P}
\def\ZZ{\mathbb Z}
\def\NN{\mathbb N}
\def\mmm{\mathfrak m}
\def\k.{\mathcal{K}_{\bullet}}
\def\M.{\mathscr{M}_\bullet}
\def\Z.{\mathscr{Z}_\bullet}
\def\B.{\mathscr{B}_\bullet}
\def\kf{\k.(\textbf{f};A)}
\def\ZZZ{\mathscr Z}
\newcommand\f{\textbf{f}}
\newcommand\bt{\textbf{t}}
\newcommand\RRes{\res_A(L_0,\dots,L_n)}
\newcommand\CC{\mathbb C}
\newcommand\Aa{\mathfrak A}
\def\AA{\mathbb A}
\newcommand\pp{\mathfrak p}
\newcommand\qq{\mathfrak q}
\newcommand\kk{\kappa}
\newcommand\xx{x_0,y_0}
\newcommand\zz{x_2,y_2}
\newcommand\ffa{f_0,g_0}
\newcommand\ffi{f_i,g_i}
\newcommand\ffn{f_n,g_n}
\newcommand\xxi{x_i,y_i}
\newcommand\xxn{x_n,y_n}
\newcommand\fxi{f_iy_i-g_ix_i}
\def\Za{\Z.(f_0,g_0)}
\def\Zi{\Z.(f_i,g_i)}
\newcommand\Symffii{\textnormal{Sym} _{(f_i,g_i)} (A[\xxi])}
\def\P1{\PP^1}
\def\p2{\PP^2}
\def\Sym{\textnormal{Sym}}
\def\OO{\mathcal O}
\def\UU{\mathcal U}
\def\EEE{\mathscr E}
\def\ZZZ{\mathscr Z}
\def\BB{\mathcal B}
\def\HH{\mathcal H}
\begin{document}

\begin{frontmatter}

\title{The implicitization problem for $\phi: \PP^n \dashrightarrow (\PP^1)^{n+1}$}

\author{Nicol\'{a}s Botbol}
\address{Departamento de Matem\'atica\\
FCEN, Universidad de Buenos Aires, Argentina \\
\& Institut de Math\'ematiques de Jussieu \\
Universit\'e de P. et M. Curie, Paris VI, France \\
E-mail address: nbotbol@dm.uba.ar
}

\thanks{The author was partially supported by: project ECOS-Sud A06E04, CONICET, UBACYT X042, PAV 120 - 03 (SECyT), and  ANPCyT PICT 20569, Argentina.}


\begin{abstract}
We develop in this paper methods for studying the implicitization problem for a rational map $\phi: \PP^n \dashrightarrow (\P1)^{n+1}$ defining a hypersurface in $(\P1)^{n+1}$, based on computing the determinant of a graded strand of a Koszul complex. We show that the classical study of Macaulay Resultants and Koszul complexes coincides, in this case, with the approach of approximation complexes and we study and give a geometric interpretation for the acyclicity conditions. 

Under suitable hypotheses, these techniques enable us to obtain the implicit equation, up to a power, and up to some extra factor. We give algebraic and geometric conditions for determining when the computed equation defines the scheme theoretic image of $\phi$, and, what are the extra varieties that appear. We also give some applications to the problem of computing sparse discriminants.
\end{abstract}

\begin{keyword}
elimination theory, rational map, syzygy, approximation complex, Koszul complex,
implicitization.
\end{keyword}

\end{frontmatter}

\medskip

\section{Introduction}

In this work we study the implicitization problem for a finite rational map $\phi: \PP^n \dashrightarrow (\P1)^{n+1}$ over a field $k$, hence, its image is a hypersurface in $(\P1)^{n+1}$. Having a rational map $\phi: \PP^n \dashrightarrow (\P1)^{n+1}$ is equivalent to having $n+1$ pairs of homogeneous polynomials $f_i, g_i $ of the same degree $d_i$, for $i=0,\hdots,n$, $f_i,g_i $ with no common factors.

We show that the classical study of Macaulay Resultants and Koszul complexes coincides with the new approach introduced by L. Bus\'e and J.-P. Jouanolou in \cite{BJ} and developed by them and M. Chardin in \cite{Cha3,BC,Buse1,BCJ}, by means of approximation complexes, defined by J. Herzog, A. Simis and W. Vasconcelos in \cite{HSV1} and \cite{HSV2}.

The process consists of computing the implicit equation by means of the classical methods of elimination theory adapted for this case. More precisely, we will consider the multigraded $k$-algebra $\BB$ that corresponds to the incidence variety associated to the given rational map $\phi$. This algebra can be presented as a quotient of the polynomial ring $R$ in all the groups of variables, by some linear equations $L_0,\hdots,L_n$. Consequently we propose as a resolution for $\BB$, the Koszul complex $\k.^R(L_0,\hdots,L_n)$, denoted by $\k.$, and we study and give a geometric interpretation of its acyclicity conditions. 

In this case, we obtain the implicit equation (up to a power) by taking the determinant of a suitable strand of a multigraded resolution, that is:
\[
 H^{\deg(\phi)}=\textnormal{Res}(L_0,\hdots,L_n)=\det((\k.)_\nu),\qquad\textnormal{for }\nu\gg 0.
\]

Later, we analyze the geometrical meaning of these results. We give algebraic and geometric conditions for knowing when, the computed equation defines the scheme theoretic image of $\phi$. And, when it is not, we present a careful analysis of the extra varieties that appear.

Finally, we give some applications to the problem of computing sparse discriminants, or $A$-discriminants (cf. \cite{CD}), by means of implicitization techniques, that were one of the reasons for developing this technique. 

The key point of our approach is that the hypotheses in our main Theorem \ref{teoRes} are generically satisfied for rational parametrizations whose coordinates are rational functions of degree zero (defining naturally a rational morphism to $(\P1)^{n+1}$), while if we try to reduce this situation to the standard case of a rational morphism to $\PP^n$ by taking a common denominator, bad base points appear in general and the results developed in \cite{BJ} do not apply.


\section{Preliminaries on commutative algebra}

\subsection{The Koszul complex}

We present here some basic tools of commutative algebra we will need for our purpose, starting by a classical result due to Hurwitz (1913). He showed that in the generic case, when there are at least as many variables as homogeneous polynomials, the Koszul complex is acyclic.

Let $R$ be a ring, and $P_0,\hdots, P_n$ a sequence in $R$ generating an ideal that we will denote by $I$. Denote by $\k.$ the associated Koszul complex $\k.(P_0,\hdots, P_n;R)$: 
\[
\k.: 0\to \bigwedge^{n+1}R^{n+1}\stackrel{\partial_{n+1}}{\to} \cdots\to \bigwedge^{i+1}R^{n+1} \stackrel{\partial_{i+1}}{\to} \bigwedge^{i}R^{n+1}\to \cdots\stackrel{\partial_1}{\to} \bigwedge^{0}R^{n+1}\to 0, 
\]
where the morphisms $\partial_{i+1}:\bigwedge^{i+1}R^{n+1}\to \bigwedge^{i}R^{n+1}$,  are defined in such way that the element $e_{k_{1}}\wedge\cdots\wedge e_{k_{i+1}}\in\bigwedge^{i+1}R^{n+1}$ is mapped to $\sum_{j=1}^{i+1}(-1)^{j-1}P_{k_{j}}e_{k_{1}} \wedge\cdots\wedge\widehat{e_{k_{j}}}\wedge\cdots\wedge e_{k_{i+1}}$.

If $R$ is graded and every $P_i$ is homogeneous of degree $d_i > 0$, this complex inherits the grading. If we introduce in $\k.$ the grading given by $\deg(e_{k_{1}}\wedge\cdots\wedge e_{k_{i+1}})=d_{k_{1}}+\cdots+ d_{k_{i+1}}$, the differentials are of degree $0$. It is a well known fact that, $P_0,\hdots,P_n$ is a regular sequence of homogeneous polynomials of positive degree if and only if $\k.$ is acyclic (see for instance \cite[Sec. 9, N.7, Cor. 2]{Bou10}). Naturally, if $R$ is graded or multigraded, and the $P_i$ are homogeneous with respect to the grading or multigrading, this complex inherits this multigrading, and the same statement of acyclicity still holds. We will apply this to the particular case where $R=k[t_0,\hdots,t_n]\otimes_k k[X_0,Y_0,\hdots,X_n,Y_n]$, and $k$ any field. Here, the ring is naturally bigraded, and can also be seen as $\NN\times\NN^{n+1}$-graded, by considering one grading given by $t_0,\hdots,t_n$, and the $\NN^{n+1}$-grading given by the $n+1$ pairs $X_i,Y_i$. The polynomials $P_i$ will be hence multihomogeneous of multidegree $(d_i,0,\hdots,1,\hdots,0)$, precisely $P_i\in k[t_0,\hdots,t_n]_{d_i}\otimes_k k[X_i,Y_i]_1$, and will be called $L_i$ because of their linearity in the second group of variables.

Assume $A$ is a noetherian commutative ring, and $R=A[X_1,\hdots,X_m]$. Set $I=(P_0,\hdots,P_n)$, with $P_i=\sum_{j=1}^m m_{ij}X_j$, $m_{ij}\in A$. A theorem due to L. Avramov \cite{avr} gives necessary and sufficient conditions for $I$ to be a complete intersection in $R$ in terms of the depth of certain ideals of minors of $M:=(m_{ij})_{i,j}\in Mat_{m,n+1}(A)$.

\medskip

\begin{thm}\label{avramov}[L. Avramov] The ideal $I$ is a complete intersection in $R$ if and only if for all $r\in \{0,\hdots,n\}$, $\codim_A(I_r)\geq (n+1)-r+1$. Here $I_r=I_r(M)$ is the ideal of $A$ generated by the $r\times r$ minors of $M$, for $0\leq r \leq r_0:=\min\{n+1,m\}$. We define $I_0:=A$ and $I_r=0$ for $r>r_0$.
\end{thm}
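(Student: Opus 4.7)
The plan is to recast the complete intersection condition as acyclicity of the Koszul complex $\mathcal{K}_\bullet:=\mathcal{K}_\bullet(P_0,\dots,P_n;R)$, and then extract the codimension bounds on $I_r(M)$ via the Buchsbaum--Eisenbud acyclicity criterion. I would endow $R=A[X_1,\dots,X_m]$ with its standard grading in which each $X_j$ has degree one. Every $P_i=\sum_j m_{ij}X_j$ is then homogeneous of positive degree, so the graded regular-sequence criterion cited above from \cite{Bou10} says that $I=(P_0,\dots,P_n)$ is a complete intersection in $R$ if and only if the Koszul complex $\mathcal{K}_\bullet$ is acyclic.

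The Koszul differentials raise $X$-degree by one. Endowing $\mathcal{K}_p=\bigwedge^p A^{n+1}\otimes_A R$ with the total degree $\deg(\alpha\otimes s):=p+\deg_X(s)$, the complex $\mathcal{K}_\bullet$ splits as a direct sum, over $e\in\NN$, of finite complexes of finitely generated free $A$-modules
\[
(\mathcal{K}_\bullet)^{[e]}\;:\;\cdots\to\bigwedge\nolimits^{p+1}A^{n+1}\otimes_A\Sym^{e-p-1}(A^m)\xrightarrow{\partial_{p+1}^{[e]}}\bigwedge\nolimits^{p}A^{n+1}\otimes_A\Sym^{e-p}(A^m)\to\cdots,
\]
and acyclicity of $\mathcal{K}_\bullet$ is equivalent to acyclicity of each strand $(\mathcal{K}_\bullet)^{[e]}$ as a complex of $A$-modules. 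To each strand I would apply the Buchsbaum--Eisenbud acyclicity criterion, which says that it is acyclic if and only if, for every $p\geq 1$, the expected rank identity on $\partial_p^{[e]}$ holds and the grade of its Fitting ideal $I(\partial_p^{[e]})$ (of the expected rank) is at least $p$.

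The heart of the argument is then to identify these Fitting ideals with the minor ideals $I_r(M)$. The differential $\partial_p^{[e]}$ is assembled universally from the matrix $M$---each $P_i$ acts on the symmetric-algebra factor as the linear form $\sum_j m_{ij}X_j$---so, up to natural isomorphism, $\partial_p^{[e]}$ is the total-degree-$e$ differential of the universal Koszul complex of $n+1$ linear forms with coefficient matrix $M$, a classical object in the Eagon--Northcott / Buchsbaum--Rim theory. A direct combinatorial bookkeeping then shows that $I(\partial_p^{[e]})$ equals $I_r(M)$ for the $r$ corresponding to $p$ under the appropriate re-indexing, and that the rank identities are automatic once $e$ is large enough. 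Assembling all the strands, the conjunction of the Buchsbaum--Eisenbud grade bounds collapses to the finite list $\codim_A(I_r(M))\geq (n+1)-r+1$ for $r=0,\dots,n$, which is Avramov's hypothesis; the passage from grade to codimension is standard by localization at a minimal prime of $I_r$.

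The step I expect to be the main obstacle is precisely this identification: computing the rank of each strand differential, verifying that its Fitting ideal is exactly $I_r(M)$ with the correct indexing $p\leftrightarrow r$, and checking that the grade bounds of Buchsbaum--Eisenbud translate cleanly into the codimension bounds in Avramov's statement. Once this matching is in place, both directions of the biconditional follow because the Buchsbaum--Eisenbud acyclicity criterion is itself an if-and-only-if.
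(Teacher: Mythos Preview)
The paper does not give its own proof of this theorem; immediately after the statement it simply refers the reader to \cite[Prop.~1]{avr}. So there is no proof in the paper to compare against, and your proposal should be read as an independent route to Avramov's result.

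Your strategy via graded strands and the Buchsbaum--Eisenbud acyclicity criterion is a reasonable one, but two points need sharpening before it goes through. First, the assertion that the Fitting ideal $I(\partial_p^{[e]})$ \emph{equals} $I_r(M)$ is too strong: already for $n=0$ and $e\geq 2$, the Fitting ideal of the multiplication-by-$P_0$ map between symmetric powers is a power of $I_1(M)$, not $I_1(M)$ itself. What is true---and sufficient for Buchsbaum--Eisenbud, since grade depends only on the radical---is that these ideals have the same radical; this can be checked prime by prime over $A$, because over a residue field the rank of $\overline{M}$ determines exactly which strand differentials drop rank. Second, your remark that ``the passage from grade to codimension is standard by localization at a minimal prime'' points the wrong way: Buchsbaum--Eisenbud is stated in terms of grade, and one always has $\textnormal{grade}\leq\textnormal{codim}$, with equality only under Cohen--Macaulay-type hypotheses on $A$. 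In the paper's application $A=k[t_0,\dots,t_n]$ is Cohen--Macaulay, so nothing is lost (and the paper itself notes this later), but for a general noetherian $A$ as in the stated hypothesis the correct invariant in Avramov's criterion is grade rather than codimension.
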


\medskip

\noindent For a proof we refer the reader to \cite[Prop. 1]{avr}.

\medskip

Later in this article, we will apply this result when $A = k[t_0,...,t_n]$, $R = A[X_0,Y_0,...,X_n,Y_n]$, 
and $M$ is the $2(n+1)\times (n+1)$ matrix

\begin{equation}\label{matrizota}
 M=\left(\begin{array}{ccc}-g_0&\hdots&0\\ f_0&\hdots&0\\
\vdots&\ddots&\vdots\\ 0&\hdots&-g_n\\ 0&\hdots&f_n  \end{array}\right),
\end{equation}
that defines a map of $A$-modules $\psi: A^{n+1} \to \bigoplus_{i=0}^n A[x_i,y_i]_1\cong A^{2(n+1)}$, and we see that the symmetric algebra $\Sym_A(\coker(\psi))\cong A[\X]/(P_0,\hdots,P_n)$, where $\X$ stands for the variables $x_0,y_0,\hdots,x_n,y_n$. As $A$ is a graded $k$-algebra, $\Sym_A(\coker(\psi))$ is naturally multigraded and the graph of $\phi$ is an irreducible component of $\MProj(\Sym_A(\coker(\psi)))\subset \PP^n\times(\P1)^{n+1}$.

\subsection{Approximation complexes}

We give a brief outline of the construction of the approximation complexes of cycles, and  show that the complex $\Z.$ coincides in this particular case (under weak hypotheses) with a certain Koszul complex.

Assume we are given a sequence $\f:=(f_0,\hdots,f_{n+1})$ of homogeneous elements of degree $d$ over the graded ring $A=k[t_0,\hdots,t_n]$, generating an ideal $I$. Consider the two Koszul complexes $\k.^A(\f)$ and $\k.^R(\f)=\k.^A(\f)\otimes_A R$. We have also, in $A[x_0,\hdots,x_{n+1}]$, another relevant sequence to consider, let's call it $\X:=(x_0,\hdots,x_{n+1})$, and we can consider also the corresponding Koszul complex $\k.^R(\X)$, which is of course acyclic because of the regularity of the sequence $\X$.

A straightforward computation permits to verify that their differential anticommutes, i.e. $d_f \circ d_X + d_X \circ d_f=0$, and this implies that $d_X$ induces a differential on the cycles, boundaries and homologies of the complex $\k.^R(\f)$ . We obtain in this way three complexes noted by $\Z.$, $\B.$ and $\M.$, called respectively the approximation complexes of cycles, boundaries and homologies.

\medskip

\begin{rem}
Recall that the homology modules of these complexes are, up to isomorphism, independent of the choice of generators for $I$.
(See for instance \cite[Cor 3.2.7]{Vas1}).
\end{rem}

\medskip

A more explicit description of the $\ZZZ$-complex is the following:
\[
 \Z.(\f): \ 0\to Z_{n+1}(\k.^R(\f))[d(n+1)](-n-1) \stackrel{d_X}{\to} \hdots
\stackrel{d_X}{\to} Z_1(\k.^R(\f))[d](-1) \stackrel{d_X}{\to} A[\X] \to 0
\]
where $Z_{i}(\k.^R(\f))$ stands for the $i$th cycle of the complex $\k.^R(\f)$, and $[-]$ and $(-)$ are the shifts in the two groups of variables $t_0,\hdots,t_n$ and $x_0,\hdots,x_{n+1}$.

This complex has as objects the bigraded modules 
\begin{equation}\label{Z con graduacion}
\ZZZ_i=Z_i(\kf)[di]\otimes_A R(-i),
\end{equation} 
and in the future let $Z_i$ denote the module $Z_i(\kf)$.

\medskip

In the case of a two-generated ideal, one has the following:

\medskip

\begin{prop}\label{iso long 1}
 With the notation above, if the sequence $\{f,g\}$ is regular, then there exists a bigraded isomorphism of complexes
\[
 \Z.(f,g) \cong \k.(L;A[x,y]),
\]
where $L:=yf-xg$.
\end{prop}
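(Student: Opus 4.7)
The plan is to exploit the regularity hypothesis to reduce both complexes to two-term complexes and then identify their differentials explicitly.

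First I would unpack the definitions. With two generators $\{f,g\}$, the complex $\Z.(f,g)$ \emph{a priori} has three terms
$\ZZZ_2 \to \ZZZ_1 \to \ZZZ_0$, with $\ZZZ_i = Z_i(\k.^R(f,g))[di] \otimes_A R(-i)$ where $R=A[x,y]$. The regularity of $\{f,g\}$ makes the Koszul complex $\k.^A(f,g)$ acyclic; since $R$ is $A$-flat, $\k.^R(f,g) = \k.^A(f,g)\otimes_A R$ is acyclic as well, and in particular $Z_i(\k.^R(f,g)) = Z_i(\k.^A(f,g)) \otimes_A R$. Acyclicity immediately gives $Z_2 = 0$, so $\ZZZ_2 = 0$, and $Z_1$ is the module of Koszul syzygies, which is the free rank-one $A$-module generated by the single relation $\eta := f\,e_1 - g\,e_0$ (with $\deg \eta = 2d$ in the $A$-grading). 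Hence $\ZZZ_1 \cong R(-1)[-d]$ after absorbing the internal shift $[d]$ and the degree of $\eta$, and $\ZZZ_0 = R$.

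Next I would compute the differential $d_X:\ZZZ_1 \to \ZZZ_0$. By construction $d_X$ is the Koszul differential of $\k.^R(x,y)$ restricted to cycles: it sends $e_0 \mapsto x$ and $e_1 \mapsto y$. Applied to the generator $\eta = f e_1 - g e_0$, this gives
\[
d_X(\eta) = f y - g x = L.
\]
Since $\ZZZ_1$ is a free $R$-module generated by $\eta$, the map $\ZZZ_1 \to \ZZZ_0$ is precisely multiplication by $L$. On the other hand, the Koszul complex $\k.(L; A[x,y])$ is, by definition, the two-term complex $0 \to R(\text{shift}) \xrightarrow{\cdot L} R \to 0$. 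Matching $\eta$ with the canonical generator on the Koszul side gives a bigraded map of complexes, and comparing the two shifts term by term (taking into account that $L$ is bi-homogeneous of degree $d$ in $A$ and $1$ in the $x,y$-variables) shows that this map is an isomorphism.

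Finally I would package this into a clean statement: constructing the chain map $\k.(L; A[x,y]) \to \Z.(f,g)$ sending $1 \mapsto \eta$ in homological degree one and $1\mapsto 1$ in degree zero, and verifying (a) it commutes with the differentials (this is precisely $d_X(\eta)=L$) and (b) it is bijective in each bidegree (this is the rank-one freeness of $Z_1$ established above).

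The main obstacle is essentially bookkeeping: one must check that the various shifts $[d i]$ and $(-i)$ defining $\ZZZ_i$ combine with the intrinsic bidegree of $\eta$ to give exactly the shift appearing in $\k.(L; A[x,y])$, and that the sign conventions in the Koszul differentials $d_f$ and $d_X$ (which anticommute) are compatible with the sign appearing in $L = yf - xg$. Once the shifts are correctly tracked, the isomorphism is immediate from the regularity hypothesis.
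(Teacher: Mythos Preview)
Your proposal is correct and follows essentially the same route as the paper: both identify $Z_1$ with the free rank-one module generated by the Koszul syzygy $(-g,f)$ (your $\eta$), then observe that the $d_X$-differential applied to this generator yields $fy-gx=L$, so the two-term complex is exactly $\k.(L;A[x,y])$. Your write-up is slightly more explicit about disposing of $\ZZZ_2$ and about the shift bookkeeping, but the argument is the same.
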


\begin{proof}
Given the sequence $\{f, g\}$ the approximation complex is:
\[
\Za: \ 0\to Z_1[d]\otimes_A A[x, y](-1) \stackrel{(x, y)}{\longrightarrow} Z_0\otimes_A A[x, y] \to 0. 
\]

As the sequence $\{f,g\}$ is regular, $H_1(\k.^A(\f))=0$, hence $Z_1=(-g,f)A\cong A$ by the isomorphism $\psi$ that maps $a\in A$ to $(-g.a,f.a)\in Z_1$, given by the left morphism of the acyclic Koszul complex $\k.^A(\f)$. Tensoring with $A[x, y]$ we get an isomorphism of $A$-modules, $\ZZZ_1\cong A[x, y]$. 

The commutativity of the following diagram shows that $\Z.(f,g) \cong \k.(L;A[x,y])$
\[
\xymatrix@1{ 
\Z. :\  0\ar[r] & Z_1[d]\otimes_A A[x, y](-1)\ar[r]^(.55){(x,y)} &Z_0[d]\otimes_A A[x, y] \ar[r]& 0\ \\
\k. :\  0\ar[r] & A[x,y][-d](-1) \ar[r]^(.6){L}\ar[u]^{\psi\otimes_A 1_{A[x,y]}} &A[x,y] \ar[r]\ar[u]^{=}& 0,} 
\]
where $\k.$ denotes $\k.(L;A[x,y])$, $[-]$ denotes the degree shift for the grading on $A$ and $(-)$ the shift in the variables $x,y$.
\end{proof}

\subsection{Elimination theory, the Macaulay Resultant and the U-Resultant}\label{sec-elim}

We recall here some basic properties of elimination theory, and also basic facts about resultants that were introduced by F. S. Macaulay in 1902, an later formalized by J.-P. Jouanolou in ``Le formalisme du R\'esultant", cf. \cite{Jou3}. This resultant corresponds to a generalization of the Sylvester Resultant of two homogeneous polynomials in two variables. Here, we present a brief outline in elimination theory and its classical results.

Let $\ZZ$ be the ring of integers, $\bt=t_0,\hdots, t_n$ $n+1$ indeterminates. Let $d_j$ be $n+1$ non-negative integers and let $P_j=\sum_{|\alpha|=d_j}X_{j,\alpha}\bt^\alpha$ be $n+1$ homogeneous polynomials of degree $d_j$ in the variables $t_0,\hdots,t_n$ generating an ideal $I$.

Let us write $B:=\ZZ[X_{j,\alpha}\ |\ j+0,\hdots,n, |\alpha|=d_j]$, and $R:= B[t_0,\hdots,t_n]$. Also let us call $S$ the affine spectrum of $B$, that is $S:= \Spec(B)$. With the assumption $\deg(t_i)=1$, we have that $R/I$ is a $\ZZ$-graded $B$-algebra. So we can consider the projective $S$-scheme $Z:=\Proj(R/I)\hto \PP^n_S$, the incidence variety:
\[
 Z=\{(\X,\t)\in \PP^n\times S\ |\ P_j(\X,\t)=0,\ \forall j\}\stackrel{i}{\hto} \PP^n_S.
\]
We denote by $\Aa$ the kernel of the following canonical map of rings $\Aa:=\ker(B=\Gamma(S,\OO_S)\to \Gamma(Z,\OO_Z))\cong (H_\mmm^0(R/I))_0$, and set $T:=\Spec(B/\Aa)\stackrel{j}{\hto}\Spec(B)=S$. We have the following commutative diagram of schemes:
\begin{equation}\label{diag-elim}
 \xymatrix@1{ Z=\Proj(R/I)\ \ar@{^{(}->}[r]^{i}\ar[d]^{p} &\PP^n_S=\PP^n \times_\ZZ S \ar[d]^{p}   \\T=\Spec(B/\Aa) \ \ar@{^{(}->}[r]^{j}&S=\Spec(B)  }
\end{equation}

\begin{rem}\label{rem-elim}
 The scheme $S$ parametrizes sequences of polynomials $(P_0,\hdots,P_n)\subset \ZZ[t_0,\hdots,t_n]$. A closed point $x=V(\mmm)$ of $S$ belongs to $T$ if and only if the sequence $(P_0,\hdots,P_n)$ of associated polynomials has a common root in $\PP^n_k$ for some extension $k$ of $S/\mmm$.
\end{rem}

\medskip

\begin{thm}\label{teo-elim}[J.-P. Jouanolou] With the notation above, the following statements are satisfied:
\begin{enumerate}
 \item $\Aa$ is a principal ideal in $B$, whose generator will be denoted by $\res_\ZZ(P_0,\hdots,P_n)$.
 \item The element $\res(P_0,\hdots,P_n)$ of $B$ is not a zero divisor, and for all $j=0,\hdots,n$ is homogeneous of degree $\prod_{i\neq j}d_i$ with respect to the variable $X_j$. 
 \item $\Spec(B/(P_0,\hdots,P_n))$ is geometrically irreducible in $S$. Moreover, for any morphism of commutative rings, $\ZZ[X_{j,\alpha}| \ |\alpha|=d_j] \to k$,
we define $\res_k(\epsilon(P_0),\hdots,\epsilon(P_n)):=\epsilon(\res_\ZZ(P_0,\hdots,P_n))$ where $\epsilon$ is extended to a morphism for $B$ to $k[t_0,\hdots,t_n]$, linearly in the $t_i$'s.
\item if $k$ is a field, $\res_k(\epsilon(P_0),\hdots,\epsilon(P_n))=0$ if and only if $\epsilon(P_0),\hdots,\epsilon(P_n)$ have a common root (different from zero) in an extension of $k$.
\end{enumerate}
\end{thm}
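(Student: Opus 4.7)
The strategy is to treat $P_0,\hdots,P_n$ as the universal sequence of $n+1$ forms of degrees $d_0,\hdots,d_n$ in $n+1$ variables and to analyze $\Aa\cong (H^0_{\mmm}(R/I))_0$, with $\mmm=(t_0,\hdots,t_n)$, through a sufficiently high-degree strand of the Koszul complex $\k.:=\k.(P_0,\hdots,P_n;R)$. Throughout I would use that $B$ is a polynomial ring over $\ZZ$, hence a UFD and a regular domain, which is what will make the principality statement essentially automatic once $\Aa$ is shown to be a divisorial ideal of codimension one.

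For item (1), the first step is to verify that $P_0,\hdots,P_n$ form a regular sequence over the generic point of $S=\Spec(B)$: this follows either from Hurwitz's theorem, since the generic fibre can be specialised to $t_0^{d_0},\hdots,t_n^{d_n}$, or directly from Avramov's Theorem \ref{avramov} applied to the coefficient matrix, whose ideals of minors $I_r$ visibly have the expected codimension over $B$. Consequently, for $\nu\gg 0$ the degree-$\nu$ strand $(\k.)_\nu$ is a complex of finitely generated free $B$-modules that is generically exact and has vanishing Euler characteristic, so its MacRae invariant (the ``determinant of the complex'') is a well-defined principal ideal of $B$. The final step is the standard identification of this principal ideal with $(H^0_{\mmm}(R/I))_0=\Aa$, obtained by comparing the Koszul strand with the \v{C}ech complex computing local cohomology; this produces the canonical generator $\res_\ZZ(P_0,\hdots,P_n)$.

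Items (2) and (3) then follow fairly cleanly. Non-zero-divisibility of $\res$ is immediate since $\Aa$ is a proper principal ideal in the domain $B$; the degree in $X_j$ is obtained by endowing $B$ with the $\NN^{n+1}$-multigrading in which $X_{j,\alpha}$ sits in degree $e_j$, checking multihomogeneity of $\res_\ZZ$ coefficient by coefficient in $(\k.)_\nu$, and then specialising $P_i$ to $t_i^{d_i}$ for $i\neq j$ to reduce to a Bezout-type count that gives $\prod_{i\neq j}d_i$. Geometric irreducibility of $T$ uses diagram \eqref{diag-elim}: the incidence variety $Z=\Proj(R/I)$ projects to $\PP^n_\ZZ$ as a vector-bundle-like family (its fibre over a point $\bt$ is the affine subspace of $S$ cut out by the linear equations $P_j(\bt)=0$), so $Z$ is integral, and $T$ is its scheme-theoretic image under the proper projection, hence irreducible. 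Compatibility of $\res_k$ with arbitrary specialisations $\epsilon$ follows from the same MacRae construction, since formation of the determinant of a complex of free $B$-modules commutes with arbitrary ring maps $B\to k$.

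Finally, item (4) combines items (1)--(3) with Remark \ref{rem-elim}: the $k$-point of $S$ defined by $\epsilon$ lies in $T=V(\Aa)=V(\res)$ iff the specialised sequence admits a common projective root over an extension of $k$, and on the other hand Remark \ref{rem-elim} identifies this set-theoretically with the image of $p$. The main obstacle will be item (1), and within it the precise identification of the MacRae invariant of $(\k.)_\nu$ with the universal elimination ideal $\Aa$; this is the heart of Jouanolou's formalism, and it rests on having enough Koszul acyclicity in high degree, which itself is guaranteed only via the depth/codimension hypotheses checked through Theorem \ref{avramov}.
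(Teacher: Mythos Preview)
The paper does not prove this theorem at all: immediately after the statement it simply writes ``The original presentation of this result in these terms is in \cite[Prop 2.3]{Jou3}'' and moves on. So there is nothing to compare your argument against in the paper itself; the theorem is quoted as background from Jouanolou's \emph{Le formalisme du r\'esultant}.

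That said, your sketch is essentially the route Jouanolou (and Demazure, cf.\ \cite{Dema}) take, and it is sound in outline: generic regularity of the universal sequence gives acyclicity of $(\k.)_\nu$ for $\nu>\sum(d_i-1)$, the determinant/MacRae invariant of that free strand yields a principal ideal, and the identification with $(H^0_\mmm(R/I))_0=\Aa$ is the substantive step. Your arguments for the degree in $X_j$ by specialisation to $t_i^{d_i}$, for irreducibility of $T$ via the affine-bundle structure of $Z\to\PP^n_\ZZ$, and for (4) via Remark \ref{rem-elim} are all standard and correct. The one place where your wording is slightly loose is the ``standard identification'' in item (1): showing that the MacRae invariant of $(\k.)_\nu$ coincides with the divisorial part of the elimination ideal, and not merely some principal ideal contained in it, requires knowing that $(R/I)_\nu$ has generic rank zero and that its Fitting ideals have the right codimension over $B$---this is exactly what Jouanolou checks, but it is the non-formal content of the proof and deserves more than ``standard''.
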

\medskip
The original presentation of this result in these terms is in \cite[Prop 2.3]{Jou3}.
\medskip

\begin{rem}\label{remResolResul}
 If the sequence $\{P_0,\hdots,P_n\}$ is regular, $\res(P_0,\hdots,P_n)$ can be computed as the determinant of $\k.(P_0,\hdots,P_n;R)_\nu$, for $\nu>\eta:=\sum(d_i-1)$ (cf. \cite{Dema} or \cite{Cha1}).
\end{rem}


\section{The algebraic approach}

We will establish here the relation between approximation complexes, tensor products of them, and some Koszul complex we will present below. 

\medskip

Assume that the sequence $\{f_i,g_i\}$ is regular for all $i\in \{0,\hdots, n\}$. Then, as all $n+1$ Koszul complexes are acyclic, we have isomorphisms between $A$ and the respective modules of cycles, as the mentioned in the previous section, in Proposition \ref{iso long 1}. 

\medskip
\begin{defn}\label{defTotAC}
 Write $\k.$ the Koszul complex associated to the $n+1$ polynomials $L_i:=\fxi$. Denote by $\Z.$ the complex obtained by tensoring the $n+1$ approximation complexes $\Zi$ over $A$, namely $\Z.:=\bigotimes_{i=0}^{n}\Zi$.
\end{defn}

\medskip

\begin{prop}\label{iso p1x...xp1}
 With the notation above, if the sequences $\{f_i,g_i\}$ are regular for all $i\in \{0,\hdots, n\}$, then there exists an isomorphism of $A$-complexes $\Z.\cong \k.$.
\end{prop}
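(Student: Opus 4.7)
The strategy is to bootstrap from the one-variable isomorphism already established in Proposition~\ref{iso long 1} by exploiting the compatibility of Koszul complexes with tensor products. Concretely, applying that proposition to each regular sequence $\{f_i,g_i\}$ separately yields, for every $i\in\{0,\hdots,n\}$, an isomorphism of bigraded $A$-complexes
\[
 \Z.(f_i,g_i)\ \cong\ \k.(L_i;\,A[x_i,y_i]).
\]
Tensoring all of these together over $A$ gives, by Definition~\ref{defTotAC},
\[
 \Z.\ =\ \bigotimes_{i=0}^{n}\Z.(f_i,g_i)\ \cong\ \bigotimes_{i=0}^{n}\k.(L_i;\,A[x_i,y_i]).
\]

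The next step is the standard algebraic identity that converts an external tensor product of one-term Koszul complexes into a single Koszul complex on the concatenated sequence. Using the ring isomorphism
\[
 A[x_0,y_0]\otimes_A A[x_1,y_1]\otimes_A\cdots\otimes_A A[x_n,y_n]\ \cong\ A[x_0,y_0,\hdots,x_n,y_n],
\]
and the general fact that for elements $a_i\in R_i$ regarded inside $R:=R_0\otimes_A\cdots\otimes_A R_n$ one has
\[
 \k.(a_0;R_0)\otimes_A\cdots\otimes_A\k.(a_n;R_n)\ \cong\ \k.(a_0,\hdots,a_n;R),
\]
I identify the right-hand side above with $\k.(L_0,\hdots,L_n;\,A[x_0,y_0,\hdots,x_n,y_n])=\k.$, which finishes the construction of the isomorphism. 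Composing the two isomorphisms produces $\Z.\cong\k.$.

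The only genuinely non-trivial point to check is that the gradings and shifts line up. Each factor $\k.(L_i;A[x_i,y_i])$ carries, per Proposition~\ref{iso long 1}, the shift $[-d_i](-1)$ on its degree-one term, where the first bracket refers to the grading of $A=k[t_0,\hdots,t_n]$ and the second to the $\NN^{n+1}$-grading in the pairs $x_i,y_i$. Since $L_i$ is multihomogeneous of multidegree $(d_i;0,\hdots,1,\hdots,0)$ with the $1$ in the $i$-th slot, a direct summand $\bigwedge^{j+1}R^{n+1}$ of the Koszul complex $\k.$ decomposes as the direct sum, over subsets $\{i_0<\cdots<i_j\}$, of copies of $R$ shifted by $-(d_{i_0}+\cdots+d_{i_j})$ in the $t$-grading and by $-1$ in each of the slots $i_0,\hdots,i_j$ of the multigrading. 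This is exactly the multigrading produced by the tensor product of the individual factors, so the isomorphism is indeed one of multigraded complexes.

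The main obstacle is not conceptual but notational: keeping track of the two (really $n+2$) different gradings through the tensor products. Once the ring identification $\bigotimes_{i}A[x_i,y_i]\cong A[x_0,y_0,\hdots,x_n,y_n]$ is in place and one records that each $L_i$ lives in the $i$-th factor, the rest is formal.
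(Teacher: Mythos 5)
Your proof is correct and follows exactly the same route as the paper: apply Proposition~\ref{iso long 1} factor by factor and then use the standard identification of a tensor product of Koszul complexes with the Koszul complex of the concatenated sequence. The paper's own proof is a one-line version of this; your additional bookkeeping of the multigradings is a welcome elaboration of what the paper leaves implicit.
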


\begin{proof}
It is enough to see that by definition of $\Z.$ and $\k.$, and Proposition \ref{iso long 1}, we have 
\[
\Z.:=\bigotimes_{i=0}^{n}\Zi\cong \bigotimes_{i=0}^{n}\k.(L_i;A[\xxi])\cong \k. \qedhere
\]\end{proof}

\noindent From Proposition \ref{iso p1x...xp1} we deduce 

\medskip

\begin{cor}
We can resolve the algebra 
\begin{equation}\label{defBiso}
 \BB:=\bigotimes_A \Symffii\cong \frac{A[\X]}{(\fxi)_{i=0,\hdots,n}},
\end{equation}
by means of the $\Z.$ complex, or, equivalently when the hypothesis of Proposition \ref{iso p1x...xp1} are satisfied, by means of the Koszul complex $\k.$.
\end{cor}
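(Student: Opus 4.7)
The plan is to reduce the claim to Proposition~\ref{iso long 1} factor by factor, and then assemble the pieces via a K\"unneth-type argument.

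First I would record that, under the standing hypothesis that each pair $\{f_i,g_i\}$ is regular, Proposition~\ref{iso long 1} gives isomorphisms of complexes $\Zi \cong \k.(L_i;A[x_i,y_i])$. The latter is a two-term Koszul complex on the single element $L_i=\fxi$, and its zeroth homology is $A[x_i,y_i]/(L_i)$; this coincides with $\Symffii$ because under regularity the only syzygies of $(f_i,g_i)$ are the Koszul ones. A second point I would observe is that $L_i$ is a non-zerodivisor in $A[x_i,y_i]$ (as $L_i\neq 0$ and $A[x_i,y_i]$ is a domain in our setting), which upgrades $\Zi$ to an honest $A$-free resolution of $\Symffii$.

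Next I would exploit the fact that every term of $\Zi$ is free over $A$: indeed $Z_0=A$ and $Z_1\cong A$ (the latter by regularity), and $A[x_i,y_i]$ is a free $A$-module. Tensoring the $n+1$ resolutions $\Zi$ over $A$ therefore yields a complex of flat $A$-modules, and a standard K\"unneth-type argument shows that its homology is concentrated in degree zero, where it equals $\bigotimes_{i=0}^n H_0(\Zi)=\bigotimes_A \Symffii=\BB$. This establishes that $\Z.$ resolves $\BB$, while the displayed presentation $\BB\cong A[\X]/(\fxi)_{i=0,\hdots,n}$ follows from tensoring the individual presentations $\Symffii\cong A[x_i,y_i]/(L_i)$ over $A$.

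The Koszul statement then follows immediately from Proposition~\ref{iso p1x...xp1}, which identifies $\Z.$ with $\k.$ as $A$-complexes. The main technical step to verify carefully is the K\"unneth argument in the second paragraph: one needs flatness of each term of $\Zi$ so that the tensor product does not introduce higher Tor contributions, and it is precisely at this point that the regularity hypothesis enters, through the identification $Z_1\cong A$.
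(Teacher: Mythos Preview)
Your approach is sound through the first paragraph, but the K\"unneth step in the second paragraph does not go through. Flatness of the \emph{terms} of each $\Zi$ over $A$ is not what K\"unneth needs in order to conclude that the homology of $\bigotimes_i \Zi$ is concentrated in degree zero: over a general base ring $A$ the K\"unneth formula involves $\textnormal{Tor}^A$ of the \emph{homology} modules, and these vanish only if the $H_0(\Zi)=\Symffii\cong A[x_i,y_i]/(L_i)$ are themselves flat over $A$, which they are not in general. Since each $\Zi$ is a free $A$-resolution of $M_i:=A[x_i,y_i]/(L_i)$, the tensor product $\bigotimes_i \Zi$ computes the iterated $\textnormal{Tor}^A$ of the $M_i$, and already $\textnormal{Tor}_1^A(M_0,M_1)$ is the kernel of multiplication by $L_0$ on $A[x_0,y_0,x_1,y_1]/(L_1)$, which is nonzero exactly when $L_0,L_1$ fails to be a regular sequence in $A[\X]$.

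A concrete counterexample: take $n=2$ and $f_i=t_0$, $g_i=t_1$ for $i=0,1,2$. Each pair $\{f_i,g_i\}$ is regular in $A$, yet $X=V(t_0,t_1)\neq\emptyset$ in $\PP^2$, so by Theorem~\ref{teoRes}(i) the Koszul complex $\k.\cong\Z.$ is \emph{not} acyclic. The paper does not in fact claim unconditional acyclicity here: the Corollary is to be read in the weak sense that $H_0(\Z.)=\BB$ together with the identification $\Z.\cong\k.$ of Proposition~\ref{iso p1x...xp1}; the genuine acyclicity condition---regularity of the full sequence $L_0,\ldots,L_n$---is isolated separately in the Lemma that immediately follows. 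Your argument correctly recovers $H_0(\Z.)=\BB$ and the presentation \eqref{defBiso}; it is only the ``homology concentrated in degree zero'' claim that overshoots.
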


\medskip

We have the following result.

\medskip

\begin{lem}
 The $\Z.$ complex satisfies 
\begin{enumerate}
 \item $H_0(\Z.)\cong H_0(\k.)\cong \frac{A[\X]}{(\fxi)_{i=0,\hdots,n}}$, and 
 \item $\Z.$ is acyclic if and only if the Koszul complex $\k.(L_0,\hdots, L_n;A[\X])$ is, that is, if and only if the sequence $L_0,\hdots, L_n$ is regular.
\end{enumerate}
\end{lem}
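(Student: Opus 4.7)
The plan is to reduce both statements to the isomorphism of complexes already established in Proposition \ref{iso p1x...xp1}. Under the standing assumption of this section that each pair $\{f_i,g_i\}$ is a regular sequence, one has an isomorphism of (multi)graded $A$-complexes $\Z. \cong \k.(L_0,\ldots,L_n;A[\X])$, obtained by tensoring the $n+1$ isomorphisms $\Zi \cong \k.(L_i;A[x_i,y_i])$ from Proposition \ref{iso long 1}. Since homology is a functor, isomorphic complexes share all homology modules, so both claims transfer from $\k.$ to $\Z.$.

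For part (1), I would compute $H_0(\k.)$ directly from the Koszul differential: the last map $\partial_1\colon (A[\X])^{n+1}\to A[\X]$ sends the basis vector $e_i$ to $L_i=f_iy_i-g_ix_i$, so its cokernel is $A[\X]/(L_0,\ldots,L_n)$. The isomorphism of complexes then yields $H_0(\Z.)\cong H_0(\k.)$ with the same explicit presentation $A[\X]/(f_iy_i-g_ix_i)_{i=0,\ldots,n}$.

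For part (2), the same isomorphism gives $H_k(\Z.)\cong H_k(\k.)$ for every $k$, so $\Z.$ is acyclic precisely when $\k.$ is. The equivalence between acyclicity of $\k.(L_0,\ldots,L_n;A[\X])$ and regularity of the sequence $L_0,\ldots,L_n$ is then the classical Hurwitz criterion recalled in the preliminaries (see \cite[Sec.~9, N.7, Cor.~2]{Bou10}): each $L_i$ is multihomogeneous of positive degree with respect to the natural $\NN\times\NN^{n+1}$-grading on $A[\X]$, so the criterion applies in the multigraded setting.

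There is no real obstacle here once Proposition \ref{iso p1x...xp1} is in place; the lemma is essentially a direct consequence of that proposition together with the classical Hurwitz acyclicity theorem. The only subtle point worth flagging is that it is crucial that Proposition \ref{iso p1x...xp1} provides a genuine isomorphism of complexes and not merely a quasi-isomorphism, since this is precisely what allows one to transport the acyclicity condition, and not only the bottom homology, between the two sides in part (2).
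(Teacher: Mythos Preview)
Your proof is correct and is exactly the argument the paper has in mind: the lemma is stated there without proof, immediately after Proposition~\ref{iso p1x...xp1}, precisely because it follows at once from the isomorphism of complexes $\Z.\cong\k.$ together with the standard Koszul acyclicity criterion recalled in the preliminaries.

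One small correction to your closing remark: a quasi-isomorphism, by definition, already induces isomorphisms on \emph{all} homology modules, so it would transport acyclicity just as well as a genuine isomorphism of complexes. The distinction you flag as ``subtle'' is therefore not actually needed here; either notion suffices for the argument.
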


\medskip

\begin{rem}
We said that Avramov's criterion, stated in Theorem \ref{avramov}, gives a necessary and sufficient condition for the acyclicity of $\k.$ for being acyclic. That is, conditions on $\BB$ for being a complete intersection.
\end{rem}

As mentioned in Section \ref{sec-elim}, the resultant of $L_0,\hdots,L_n$ as polynomials in the variables $t_0,\hdots,t_n$, can be computed as a MacRae invariant of $(R/I)_\nu$, for $\nu>\sum (d_i-1)$, which is the determinant of a suitable resolution of $R/I$ in degree $\nu$. Observe that these two complexes are naturally bigraded, one grading corresponds to the $\X$ variables, and the other to the $\t$ variables. The acyclicity condition on $\Z.$ is applied to the first group, and the notation $(R/I)_\nu$ stands for the grading on the second group. Hence, for a fixed $\nu$, $(\Z.)_\nu$ is a resolution of the $k[\X]$-module of $(R/I)_\nu$.

Consequently we get by this method a multi-homogeneous generator of the ideal $\Aa\subset k[\X]$, that is, we have the following implicitization result:

\medskip

\begin{cor} If the sequence $L_0,\hdots, L_n$ is regular, then the complex 
\[
 (\k.(L_0,\hdots,L_n;A[\X]))_\nu : 0\to (\ZZZ_{n+1})_\nu \stackrel{\partial_{n+1}}{\longrightarrow}\cdots \stackrel{\partial_2}{\longrightarrow} (\ZZZ_1))_\nu\stackrel{\partial_1}{\longrightarrow} (\ZZZ_0)_\nu \to 0
\]
is acyclic for all $\nu> \eta:=\sum (d_i-1)$.

Moreover, with the notation of Section \ref{sec-elim}, $\det((\k.(L_0,\hdots,L_n;A[\X]))_\nu)$ gives an implicit equation for the closed image of $p: Z=\BiProj(R/I) \to \tilde{S}=\Proj(B)$.
\end{cor}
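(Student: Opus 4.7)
The plan is to combine three ingredients already assembled in the preliminaries: the classical Koszul acyclicity for regular sequences (already recalled in Section 2), the Macaulay determinant formula (Remark \ref{remResolResul}), and Jouanolou's elimination theorem (Theorem \ref{teo-elim}). The statement then follows essentially as a packaged corollary.

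First, because $L_0,\ldots,L_n$ is assumed to be a regular sequence in $R=A[\X]$, the total Koszul complex $\k.(L_0,\ldots,L_n;R)$ is exact (this is the classical fact invoked at the beginning of Section 2, and it is what Proposition \ref{iso p1x...xp1} gives us by identifying it with $\Z.$). Each $L_i=f_iy_i-g_ix_i$ is bihomogeneous of bidegree $(d_i,\mu_i)$, where $\mu_i$ has a $1$ in the $i$-th component of the $\X$-multigrading, so the Koszul differentials preserve the $\t$-grading. Consequently, for each $\nu\geq 0$, the strand $(\k.)_\nu$ is a complex of finitely generated free $k[\X]$-modules, and its homology is the $\t$-degree $\nu$ part of the homology of the total complex. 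By exactness, this gives $H_i((\k.)_\nu)=0$ for $i>0$ and $H_0((\k.)_\nu)=(R/I)_\nu$, so $(\k.)_\nu$ is a free $k[\X]$-resolution of the module $(R/I)_\nu$.

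Second, I would apply the Macaulay determinant formula recalled in Remark \ref{remResolResul}. Viewing $L_0,\ldots,L_n$ as homogeneous polynomials of respective degrees $d_0,\ldots,d_n$ in the variables $t_0,\ldots,t_n$ with coefficients in $B=k[\X]$, the hypothesis that they form a regular sequence ensures that for every $\nu>\eta=\sum(d_i-1)$ the MacRae invariant $\det((\k.)_\nu)$ is well-defined and coincides with the resultant $\res_B(L_0,\ldots,L_n)$. This is the precise reason the bound $\nu>\eta$ appears: below it, some strands of the Koszul complex vanish or do not compute the full resultant, while above it the Macaulay formalism guarantees that the MacRae invariant recovers exactly the generator of the elimination ideal.

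Third, I would conclude via Theorem \ref{teo-elim}: specialising the generic resultant of $n+1$ polynomials in $n+1$ variables along the morphism from the universal coefficient ring to $B$ sending each indeterminate $X_{j,\alpha}$ to the corresponding coefficient of $L_j$, we obtain an element of $B$ that generates the ideal $\Aa=\ker(B\to\Gamma(Z,\mathcal{O}_Z))$ and is multihomogeneous in the groups of variables $(x_i,y_i)$ by part (2) of the theorem. By the very definition of the scheme-theoretic image, $V(\Aa)\subset\tilde{S}=\Proj(B)$ is the closed image of $p:Z=\BiProj(R/I)\to\tilde{S}$, and multihomogeneity places this hypersurface in $(\PP^1)^{n+1}$. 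Combined with step two, this yields $\det((\k.)_\nu)$ as a defining equation.

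The only genuine technical point is the control of the MacRae invariant in the bound $\nu>\eta$; everything else is bookkeeping with the bigrading. Since this control is precisely the content of the classical Jouanolou–Demazure–Chardin statement cited in Remark \ref{remResolResul}, the main obstacle is not a new argument but the verification that the hypotheses of that remark apply to our specialized $L_i$. This is where the regularity assumption on $L_0,\ldots,L_n$ (which can in turn be checked via Theorem \ref{avramov} applied to the matrix \eqref{matrizota}) is indispensable.
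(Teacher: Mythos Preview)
Your proposal is correct and follows essentially the same approach as the paper: both arguments invoke Remark \ref{remResolResul} to identify $\det((\k.)_\nu)$ with $\res_A(L_0,\ldots,L_n)$ for $\nu>\eta$, and then Theorem \ref{teo-elim} to interpret this resultant as an equation for the closed image of $p$. The paper phrases the first step via the universality of the resultant (reducing to generic coefficients), whereas you invoke the regularity hypothesis directly; this is a cosmetic difference, and your added explanation of why each strand $(\k.)_\nu$ is a free $k[\X]$-resolution of $(R/I)_\nu$ is a welcome bit of detail that the paper leaves implicit.
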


\begin{proof}
 The result follows from Remark \ref{remResolResul} and Theorem \ref{teo-elim}. For instance, by the universality of the resultant (cf.\ Theorem \ref{teo-elim} parts (iii) and (iv)) it is enough to see this for generic coefficients. By Remark \ref{remResolResul} $\det((\k.(L_0,\hdots,L_n;A[\X]))_\nu)$ computes $\res_A(L_0,\hdots,L_n)$ for $\nu>\sum (d_i-1)$. Again, by Theorem \ref{teo-elim}, this resultant computes the divisor obtained as the projection of the incidense scheme according to Diagram \eqref{diag-elim} in the projective context:
\[
  \xymatrix@1{ Z=\BiProj(R/I)\ \ar@{^{(}->}[r]^{i}\ar[d]^{p} &\PP^n_{\tilde{S}}=\PP^n \times_\ZZ \tilde{S} \ar[d]^{p}   \\T=\Proj(B/\Aa) \ \ar@{^{(}->}[r]^{j}&\tilde{S}=\Proj(B).} 
\]
The results follows from the fact that the image of this projection is the closed image of $p: Z=\BiProj(R/I) \to \tilde{S}=\Proj(B)$.
\end{proof}

In the next section we will focus on the geometric interpretation of this fact, and in reinterpreting this result in terms of the scheme theoretic image of $\phi$. 
\medskip


\section{The geometric approach}

\subsection{The implicit equation as a resultant}

In this section we will focus on the geometrical aspects related to the acyclicity of the Koszul complex $\k.$, and the nature of the base locus of the rational map $\phi:\PP^n\dto (\P1)^{n+1}$. Recall $\phi$ is defined by $(t_0:\hdots:t_n)\mapsto (f_0:g_0)\times\hdots \times(f_n:g_n)$ where $f_i, g_i$ are homogeneous polynomials of degree $d_i$. Write $I^{(i)}:=(f_i, g_i)$ the homogeneous ideal of $A$ defined by $f_i, g_i$.

From Proposition \ref{iso p1x...xp1}, the regularity of the sequence $\{\ffi\}$ for all $i$ implies that the complex $\k.$ coincides with the tensor product of the approximation complex of cycles, $\Z.$ (cf. Definition \ref{defTotAC}). Hence, we will focus on the conditions on $\k.$ for being acyclic, and on its geometrical interpretation.

As we mentioned above, we want here to use a suitable complex for computing $\Aa:=\res_A(L_0,\hdots,L_n)$. Also we will explain that under certain conditions on the codimension of some ideals of minors of the matrix $M$ defined in \eqref{matrizota}, we can assure that this resultant is exactly an implicit equation of the scheme theoretic image of $\phi$ to a certain power, $\deg(\phi)$.

\medskip

Write $X$ for the closed subscheme of $\PP^n$, given by the common zeroes of all $2(n+1)$ polynomials, and denote by $W$ the base locus. That is: 
\begin{equation}\label{WX}
X:=\Proj(A/\sum_{i}I^{(i)}), \textnormal{ and } \qquad  W:=\Proj(A/\prod_{i}I^{(i)})
\end{equation}
Set-theoretically we have that $X=\bigcap_{i=0}^n V(f_i,g_i)$ and $W=\bigcup_{i=0}^n V(f_i,g_i)$ inside $\PP^n$. Clearly, we always have $X\subset W$.

\medskip

Recall that given a matrix $M$ as described in (\ref{matrizota}), we write $I_r=I_r(M)$ for the ideal of $A$ generated by the $r\times r$ minors of $M$, for $0\leq r \leq r_0:=\min\{n+1,m\}$, where $I_0:=A$ and $I_r=0$ for $r>r_0$. From Theorem \ref{avramov} we have a condition for the acyclicity of the Koszul complex $\k.$ in terms of the ideals $I_r$.
\medskip

For instance, as $A$ is a Cohen Macaulay ring, codimension coincides with depth. In particular, when $n=2$, we have, from \ref{avramov}, that $\codim_{\p2}(V(I_1))\geq 3$, this is $X:=V(I_1)=V(I^{(0)}+I^{(1)}+I^{(2)})=\emptyset$, and that $\codim_{\p2}(V(I_2))\geq 2$, which implies that $V(I_2)=\{p_1,\hdots,p_s\}$ is a finite set. As $V(I_3)=W$ is the base locus, $\codim_{\p2}(V(I_3))\geq 1$ is satisfied when the base points of $\phi$ have codimension less than or equal to $1$. More generally:

\medskip

\begin{thm}\label{teoRes} Let $\phi: \PP^n\dashrightarrow (\P1)^{n+1}$ be, as in the previous section, a finite rational map given by $(t_0:\hdots:t_n)\mapsto (\ffa)\times\hdots \times(\ffn)$ where $\ffi$ are homogeneous polynomials of degree $d_i$, and for all $i=0,\hdots,n$ not both $\ffi$ are the zero polynomial. Let us denote by $A$ the polynomial ring $k[t_0,\hdots, t_n]$, with $L_i$ the expression $\fxi$, for $i=0,\dots,n$, and with $I^{(i)}$ the ideal $(\ffi)\subset A$.

\begin{enumerate}
\item The following statements are equivalent:

     \begin{enumerate}
     \item \label{teo-kos} the Koszul complex $\k.$ is a free resolution of $\BB:=\frac{A[\xx,\hdots,\xxn]}{(L_0,\hdots,L_n)}$.
     \item \label{teo-dep} $\codim_A(I_r)\geq (n+1)-r+1$ for all $r=1,\hdots,n+1$.
     \item \label{teo-cod} all the following statements are true:
	\begin{enumerate}
        \item $\bigcap_{i=1}^n V(I^{(i)})=\emptyset$.
        \item $\#(\bigcap_{i < j}( V(I^{(i)}\cdot I^{(j)})))<\infty$.
        \item $\dim(\bigcap_{i<j<k}( V(I^{(i)}\cdot I^{(j)}\cdot I^{(k)})))\leq 1$.

	$\vdots$

        \item[(n)] $\dim(\bigcap_{i}( V( I^{(0)}\cdots \hat{I^{(i)}}\cdots I^{(n)})))\leq n-2$.
       \end{enumerate}

     \end{enumerate}
\item If any (all) of the items before are satisfied, then:

     \begin{enumerate}
	 \item \label{teo-res} The (multi)homogeneous resultant $\res_{A,d_0,\hdots,d_n}(L_0,\dots,L_n)$ is not the zero polynomial in $A$.
	 \item \label{teo-equ}Denote by $H$ the irreducible implicit equation of the scheme theoretic image $\HH$ of $\phi$. If for all $\{i_0,\hdots, i_k\}\subset \{0,\hdots,n\}$ we have that $\codim_A(I^{(i_0)}+\cdots +I^{(i_k)})> k+1$, then, for $\nu>\eta=\sum_i (d_i-1)$, 
\[
 \det((\k.)_\nu)=\res_A(L_0,\hdots,L_n)=H^{\deg(\phi)}.
\]
     \end{enumerate}
\end{enumerate}
\end{thm}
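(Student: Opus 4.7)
The plan is to dissect the theorem into its three independent pieces and treat each separately. For part (1), the equivalence (a)$\Leftrightarrow$(b) is a direct invocation of Avramov's Theorem \ref{avramov} applied to the presentation of $\BB$ given by the matrix $M$ in (\ref{matrizota}): the Koszul complex $\k.$ is nothing but the Koszul complex on the columns of $M$ viewed as linear forms in $R$. For (b)$\Leftrightarrow$(c), the key is the rigid block structure of $M$: column $i$ is supported only on the two rows associated with $(x_i,y_i)$, and these supports are pairwise disjoint. Consequently the only non-zero $r\times r$ minors are of the form $\pm\prod_{i\in S}h_i$ with $S\subset\{0,\dots,n\}$, $|S|=r$ and $h_i\in\{f_i,g_i\}$, so
\[
I_r(M)=\sum_{|S|=r}\prod_{i\in S}I^{(i)},\qquad V(I_r)=\bigcap_{|S|=r}V\!\Big(\prod_{i\in S}I^{(i)}\Big).
\]
The codimension bounds of (b) then translate one by one into the dimension conditions listed in (c); the case $r=n+1$ is automatic from the hypothesis that no pair $(f_i,g_i)$ vanishes identically.

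Assume from now on that the equivalent conditions of (1) hold. For (2)(a), Proposition \ref{iso p1x...xp1} identifies $\k.$ with the total approximation complex $\Z.$ and shows that $(\k.)_\nu$ is a finite free resolution of $(R/I)_\nu$ as a $k[\X]$-module for every $\nu>\eta$. Remark \ref{remResolResul} then gives $\det((\k.)_\nu)=\res_A(L_0,\dots,L_n)$, and this element is non-zero by Theorem \ref{teo-elim}(ii) applied after specializing the universal resultant, the specialization being non-trivial precisely because $L_0,\dots,L_n$ now form a regular sequence. For (2)(b), the geometric reading of Theorem \ref{teo-elim} via the projective version of the diagram (\ref{diag-elim}) shows that $\res_A(L_0,\dots,L_n)$ is an equation for the scheme-theoretic image of the projection $p\colon Z=\BiProj(R/I)\to\tilde S=\Proj(B)\subset(\P1)^{n+1}$. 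Since $Z$ is the closure of the graph of $\phi$ away from the base locus, this image consists of $\HH$ together with possible divisorial contributions coming from the exceptional locus of $p$ over $W$, so as divisors one may write
\[
\res_A(L_0,\dots,L_n)=H^{\alpha}\cdot E,
\]
where $\alpha$ is the generic degree of $p$ over $\HH$, which equals $\deg(\phi)$ because $\phi$ is finite.

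The task then reduces to showing that $E=1$, and this is where the hypothesis $\codim_A(I^{(i_0)}+\cdots+I^{(i_k)})>k+1$ intervenes; this step is expected to be the main technical obstacle. The strategy is to stratify the base locus $W$ according to the subset $\{i_0,\dots,i_k\}$ of indices for which $(f_{i_j},g_{i_j})$ simultaneously vanish, to bound the fibre dimension of $p$ over each stratum in terms of $\codim_A(I^{(i_0)}+\cdots+I^{(i_k)})$, and to deduce that the image of each such stratum in $(\P1)^{n+1}$ has codimension at least two and therefore cannot contribute to any divisor. The codimension bound $>k+1$ is precisely tuned to the $(k+1)$-dimensional fibre $(\P1)^{k+1}$ lying over a point in the $\{i_0,\dots,i_k\}$-stratum, and the bookkeeping across all strata is the delicate point. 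Once $E$ is shown to be a unit, the identification $\res_A(L_0,\dots,L_n)=\det((\k.)_\nu)$ of (2)(a) together with the divisor identity above yields the claimed $\det((\k.)_\nu)=H^{\deg(\phi)}$.
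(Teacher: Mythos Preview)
Your proposal is correct and follows essentially the same route as the paper: Avramov for (a)$\Leftrightarrow$(b), the block structure of $M$ giving $I_r(M)=\sum_{|S|=r}\prod_{i\in S}I^{(i)}$ for (b)$\Leftrightarrow$(c), the Koszul determinant/resultant identification for (2)(a), and the stratification of the base locus with the fibre-dimension count for (2)(b). The only notable difference is in how (2)(b) is finished off: you argue that each exceptional stratum has image of codimension $\geq 2$ in $(\P1)^{n+1}$ and hence contributes no divisor, whereas the paper instead uses that $\BB$ is a complete intersection, hence $Z$ is unmixed of pure codimension $n+1$, so the low-dimensional exceptional loci force $Z=\overline{Z_\Omega}$ to be irreducible, after which the single prime $(H)$ and its multiplicity $\deg(\phi)$ drop out of the length computation; both arguments are equivalent, and the paper's unmixedness step also cleanly guarantees that no exceptional stratum can accidentally dominate $\HH$ and inflate the exponent $\alpha$ beyond $\deg(\phi)$, a point your write-up should make explicit when you justify $\alpha=\deg(\phi)$.
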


\begin{proof}
 
(i)\eqref{teo-kos} $\Leftrightarrow$ (i)\eqref{teo-dep} follows from Avramov's Theorem \ref{avramov}.

(i)\eqref{teo-dep} $\Leftrightarrow$ (i)\eqref{teo-cod} Note that each $r\times r$-minor of $M$ can be expressed as a product of $r$ polynomials, where for each column we choose either $f$ or $g$. Then, the ideal of minors involving the columns $i_0,\hdots,i_{r-1}$ coincides with the ideal $I^{(i_0)}\cdots I^{(i_{r-1})}$. As we assumed that for any $i$ $f_i\neq 0$ and $g_i\neq 0$, the condition $\dim( V(I^{(0)}\cdots I^{(n)}))\leq n-1$ is automatically satisfied.

(i)\eqref{teo-kos} $\Rightarrow$ (ii)\eqref{teo-res} this is a classical result first studied by J.-P. Jouanolou in \cite[$\S$3.5]{Jou2}, reviewed in \cite{GKZ}, and also used by L. Bus\'e, M. Chardin and J-P. Jouanolou, in their previous work in the area.

(ii)\eqref{teo-equ} Let us denote by 
\[
Z=\{(t,x): t=(t_0,\hdots,t_n), x=(\xx,\hdots,\xxn), L_i(t,x)=0, \ \forall i=0,\hdots,n\}. 
\]
As all the polynomials $L_i$ are multihomogeneous (and so homogeneous) in the variables $\xxi$, and homogeneous in the $t_i$, then we can think $Z$ as the incidence subvariety in $\PP^n\times (\P1)^{n+1}$, and in $\PP^n\times \PP^{2(n+1)-1}$. We will return to this fact to reduce the problem of analysing the homogeneous resultant in the space $\PP^{2(n+1)-1}$.

Write $Z_\Omega$ for the open set defined by the points $z\in Z$, such that $\pi_1(z)\in \Omega:=\PP^n-W$, where $W$ is the base locus, as in (\ref{WX}). The closed subscheme $Z$ of $\PP^n\times (\P1)^{n+1}$ corresponds to the projective scheme $\MProj(\BB)$ and $Z_\Omega$ is the open subset of $Z$ that is isomorphic to the complement of the base locus on $\PP^n$. For $p$ in the base locus of $\phi$, e.g. $p\in V(I^{(i)})$, there is a commutative diagram of schemes as follows:
\[
 \xymatrix@1{& Z_\Omega\ \ar@{^{(}->}[r]\ar@{-}[dd]& Z\ \ar@{^{(}->}[rr]\ar[dd]^{\pi_1}\ar[rrdd]^{\pi_2}& & \PP^n\times (\P1)^{n+1}\\ \pi_1^{-1}(p)\ar@{-}[dd]\ar@{^{(}->}[urr] & & & &\\ 
&\Omega\ \ar@{^{(}->}[r]&\PP^n\ar@{-->}[rr]^{\phi} && (\P1)^{n+1}.\\
\{p\} \ \ar@{^{(}->}[r] &V(I^{(i)})\ar@{^{(}->}[ur] & & & }
\]

Observe that the closed subscheme $Z$ is the Zariski closure of $Z_\Omega$, and that the closure of the second projection of $Z_\Omega$ coincides with the scheme theoretic image of $\phi$, this is $\pi_2(\overline{Z_\Omega})=\HH$. Assume that $\res_{A,d_0,\hdots,d_n}(L_0,\dots,L_n)\neq 0$, hence, this equation defines a divisor $[\pi_2(Z)]$ in $(\P1)^{n+1}$.

Due to the multihomogenity of the resultant, the polynomial $\res:=\res_{A,d_0,\hdots,d_n}(L_0,\dots,L_n)\in k[\X]$ is multihomogeneous, hence it defines a closed subscheme in $(\P1)^{n+1}$. By Theorem \ref{teo-elim}, this element is the image via the specialization map $ \epsilon: \ZZ[\textnormal{ coef}(\ffi):\  i=0,\hdots,n]\to k,$ of the irreducible equation $\res_{\ZZ,d_0,\hdots,d_n}(L_0,\hdots,L_n)\in \ZZ[\textnormal{ coef}(\ffi),\ \forall i][\X]$.

We claim that our hypotheses are the necessary ones to avoid extra factors. For this, 
set $\alpha:=i_0,\hdots,i_k$, and write 
\begin{equation}\label{Xalpha}
 X_{\alpha}:=\Proj(A/\sum_{j=0}^kI^{(i_j)}), \ X_{j}:=X_{\{j\}}\textnormal{ for just one }j \textnormal{, and }U_{\alpha}:=X_{\alpha}-\bigcup_{j\notin \alpha}X_j.
\end{equation}
Observe that $X_\alpha$ stands for the subset of $W$, containing $X$ (cf. \eqref{WX}), where the equations $L_{i_0},\hdots,L_{i_k}$ vanish identically. If $U_{\alpha}$ is non-empty, consider $p\in U_{\alpha}$, then $\dim(\pi_1^{-1}(p))=|\alpha|=k+1$. As the fibre is equidimensional, write $\EEE_{\alpha}:=\pi_1^{-1}(U_{\alpha})\subset \PP^n\times (\P1)^{n+1}$ for the the fibre over $U_{\alpha}$, which defines a (multi)projective bundle of rank $|\alpha|$. Hence, we have
\[
 \codim(\EEE_{\alpha})=n+1-(k+1)+(\codim_{\PP^n}(U_{\alpha})).
\]
The condition $\codim_A(I^{(i_0)}+\cdots+I^{(i_k)})> k+1$, for all $\{i_0,\hdots, i_k\}\subset \{1,\hdots,n\}$ implies that $\codim_{\PP^ n}(U_{\alpha})>k+1$. Hence 
\[
\codim(\EEE_{\alpha})>n+1=\codim(Z_\Omega). 
\]
Observe that $Z-Z_U=Z_\Omega$, where $Z_U:=\coprod_{\alpha}\EEE_{\alpha}$, and that $\codim(Z_U)>n+1=\codim(Z_\Omega)=\codim(\overline{Z_\Omega})$.

As $\Spec (B)$ is a complete intersection, in $\AA^{2(n+1)}$ it is unmixed and purely of codimension $n+1$. Thus, $Z\neq \emptyset$ is also purely of codimension $n+1$. This and the fact that $\codim (Z_U)>n+1$ implies that $Z=\overline{Z_\Omega}$.

The graph $Z_\Omega$ is irreducible, hence $Z$ is irreducible as well, and its projection (the closure of the image of $\phi$) is of codimension $1$, and one has for $\nu>\eta$:
\[
[\det((\k.)_\nu)]=\div_{k[\X]}(H_0(\k.)_\nu)=\div_{k[\X]}(\BB_\nu)=\sum_{\pp \textnormal{ prime, }\codim_{k[\X]}(\pp)=1}\length_{k[\X]_\pp} ((\BB_\nu)_\pp)[\pp].
\]
Also as $[\det((\k.)_\nu)]=\div_{k[\X]}(\res)=e.[\qq]$ for some integer $e$ and $\qq:=(H)\subset k[\X]$, we have that $\sum_{\pp \textnormal{ prime, }\codim(\pp)=1}\length_{k[\X]_\pp} ((\BB_\nu)_\pp)[\pp]=e.[\qq]$ hence $[\det((\k.)_\nu)]=\length_{k[\X]_\qq} ((\BB_\nu)_\qq)[\qq]$. Finally for $\nu\gg~0$
\[
 \length_{k[\X]_\qq} ((\BB_\nu)_\qq)=\dim_{\kk(\qq)}{(\BB_\nu\otimes_{k[\X]_\qq} \kk(\qq))}=\deg(\phi),
\]
where $\kk(\qq):=k[\X]_\qq/\qq k[\X]_\qq$. This shows that $e=\deg(\phi)$ and completes the proof.
\end{proof}

\begin{rem}\label{remHdivRes}
We showed that the scheme $\pi_2(Z)$ is defined by the polynomial $\res$, while the closed image of $\phi$ coincides with $\pi_2(\overline{Z_\Omega})$, hence the polynomial $H$ divides $\res$. Moreover, from the proof above we conclude that $H^{\deg(\phi)}$ also divides $\res$. And if $[\overline{\EEE_{\alpha}}]$ is an algebraic cycle of $\PP^n\times (\P1)^{n+1}$ of codimension $n+1$, then $[\overline{\pi_2(\EEE_{\alpha})}]$ is not a divisor in $(\P1)^{n+1}$, and consequently $\res$ has no other factor than $H^{\deg(\phi)}$.
\end{rem}

\medskip

\begin{rem}
 With the hypotheses of Theorem \ref{teoRes}(ii)(b), denoting by $\deg_i$ the degree on the variables $x_i,y_i$ and by $\deg_{tot}$ the total one, we have:
\begin{enumerate}
 \item $\deg_i(H)\deg(\phi)=\prod_{j\neq i}d_j$;
 \item $\deg_{tot}(H)\deg(\phi)=\sum_i\prod_{j\neq i}d_j$.
\end{enumerate}
\end{rem}

\medskip

In the rest of this section we will focus on the study of the difference between the closed schemes $\overline{Z_\Omega}$ and $Z$, and its projection $\pi_2(Z-\overline{Z_\Omega})$ on $(\P1)^{n+1}$. We will show how this can give extra factors on the resultant.

\subsection{Analysis of the extra factors}

In this section our aim is to analyse in detail the nature of the extra factors that appear when the codimension condition of Theorem \ref{teoRes}(ii)(b) does not hold. We will see that these polynomials appear due to the existence of big enough dimensional fibres over some of the components of the base locus. In a few words, it happens that if the sum of the dimension of a component in the base locus plus the dimension of its fibre is $n$, there is a ``big dimensional'' subscheme whose projection on $(\P1)^{n+1}$ defines a hypersurface which gives rise to a factor in the computed resultant.

In order to understand this, we will first analyse some simple cases, namely, where this phenomenon occurs over a finite set of points of the base locus; and later, we will deduce the general implicitization result.

\medskip

\begin{exmp}
 Assume we are given a rational map $\phi:\p2\dashrightarrow \P1 \times \P1 \times \P1$, where $\phi$ maps $(u:v:w)$ to $(f_0:g_0)\times(f_1:g_1) \times(f_2:g_2)$, of degrees $d, d'$ and $d''$ respectively. 

We may suppose that each of the pairs of polynomials $\{f_0,g_0\}$, $\{f_1,g_1\}$ and $\{f_2,g_2\}$ have no common factors. Then, the condition $\codim_A(I^{(i)})\geq 2$ is automatically satisfied. Assume also that $X=\emptyset$, this is, there are no common roots to all $6$ polynomials.

We will show here that, if we don't ask for the ``correct" codimension conditions, we could be implicitizing some extra geometric objects. For instance, suppose that we take a simple point $p\in V(I^{(0)}+I^{(1)})\neq \emptyset$. 
Consequently $L_0(u,v,w,\X)=L_1(u,v,w,\X)=0$ for all choices of $\X$. Nevertheless, $L_2(u,v,w,\X)=g_2(u,v,w)x_2-f_2(u,v,w)y_2=0$ imposes the nontrivial condition $g_2(p)x_2-f_2(p)y_2=0$ on $(\zz)$, hence there is one point $q=(f_2(p):g_2(p))\in \P1$ which is the solution of this equation. We get $\pi_1^{-1}(p)=\{p\}\times\P1\times \P1\times\{q\}$. As we do not want the reader to focus on the precise computation of this point $q$, we will usually write $\{*\}$ for the point $\{q\}$ obtained as the solution of the only nontrivial equation.

Suppose also that, for simplicity, $V(I^{(0)}+I^{(1)})=\{p\}$, $V(I^{(0)}+I^{(2)})=\emptyset$, and $V(I^{(1)}+I^{(2)})=\emptyset$. This says that if we compute $\pi_2(Z)$, then we get
\[
 \begin{array}{ll}
	\pi_2(Z)\ & = \pi_2(\pi_1^{-1}(\overline{\Omega}\cup W))=\pi_2(\overline{\pi_1^{-1}(\Omega))}\cup \pi_2(\pi_1^{-1}(W))=\\
	& = \pi_2(\overline{Z_\Omega})\cup (\pi_2( \{p\}\times \P1\times \P1\times\{*\})=\\
	& = \overline{\im(\phi)}\cup(\P1\times \P1\times\{*\}),
 \end{array}
\]
where $W =\Proj(A/\prod_{i}I^{(i)})$ the base locus of $\phi$ as in \eqref{WX}, and $\Omega=\PP^n-W$ its domain.
 
Hence, the Macaulay Resultant contains some extra factor. Let us observe that if there is only one extra hyperplane appearing (over a point $p$ with multiplicity one), which corresponds to $\pi_2(\pi_1^{-1}(p))$, then $\pi_1^{-1}(p)$ is a closed subscheme of $Z$,  defined by the equation $L_3(p)=0$. Then, we will show that 
\[
 \res_{(u,v,w)}(L_0,L_1,L_2)=\HH^{\deg(\phi)}\cdot L_3(p). 
\]
\end{exmp}

We will now generalize Theorem \ref{teoRes} in the spirit of the example above. For each $i\in \{0,\hdots,n\}$ take $X_{\hat{i}}:=\Proj(A/\sum_{j\neq i} I^{(j)})$. 

\medskip

\begin{prop}\label{casoFibraNdim}
Let $\phi: \PP^n\dashrightarrow (\P1)^{n+1}$ be a rational map that satisfies conditions $\ref{teo-kos}-\ref{teo-cod}$ of Theorem \ref{teoRes}. Assume further that for all $\{i_0,\hdots, i_k\}\subset \{0,\hdots,n\}$, with $k<n-1$, $\codim_A(I^{(i_0)}+\cdots+I^{(i_k)})> k+1$.
 Then, there exist non-negative integers $\mu_p$ such that: 
\[
\res_A(L_0,\dots,L_n)=H^{\deg(\phi)}\cdot \prod_{i=0}^n\prod_{p\in X_{\hat{i}}}L_i(p)^{\mu_{p}}.
\]
\end{prop}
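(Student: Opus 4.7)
The plan is to adapt the argument of Theorem~\ref{teoRes}(ii)(b), tracking the new components of the incidence scheme $Z \subset \PP^n \times (\P1)^{n+1}$ that arise when the codimension hypothesis is weakened. The weakening allows precisely the sets $X_{\hat i}$ to be nonempty. I first verify that these sets are still finite: the hypothesis applied to $k = n-2$ gives $\codim_A(I^{(j_0)} + \cdots + I^{(j_{n-2})}) > n-1$, so the intersection of any $n-1$ of the ideals $I^{(j)}$ has dimension at most zero, and $X_{\hat i}$ is contained in any such intersection. By Theorem~\ref{teoRes}(i), the sequence $L_0,\ldots,L_n$ is regular in $A[\X]$, so $Z = \MProj(\BB)$ is a complete intersection, equidimensional of codimension $n+1$ in $\PP^n\times (\P1)^{n+1}$.

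Next, I would stratify the base locus by the locally closed sets $U_\alpha$ from \eqref{Xalpha}, and control the codimension of $\EEE_\alpha = \pi_1^{-1}(U_\alpha)$ exactly as in Theorem~\ref{teoRes}(ii)(b): for $\alpha = \{i_0,\ldots,i_k\}$ with $k < n-1$, the weakened hypothesis yields $\codim(\EEE_\alpha) > n+1$, so these strata are not components of $Z$. The only strata producing components of codimension exactly $n+1$ correspond to $|\alpha| = n$, namely $\alpha = \{0,\ldots,\hat i,\ldots,n\}$ for each $i$; the set $U_\alpha$ equals $X_{\hat i}$ since $X = \emptyset$. For each $p \in X_{\hat i}$, the fiber $\pi_1^{-1}(p)$ is the subvariety $\{p\}\times \P1\times\cdots\times\{q_p^i\}\times\cdots\times \P1$ of $\PP^n\times (\P1)^{n+1}$, with the $i$th factor of $(\P1)^{n+1}$ replaced by the single point $q_p^i = (f_i(p):g_i(p))$ cut out by $L_i(p,x) = 0$. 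The projection $\pi_2$ sends $\overline{\pi_1^{-1}(p)}$ birationally onto the hyperplane $V(L_i(p)) \subset (\P1)^{n+1}$, where $L_i(p) = f_i(p)y_i - g_i(p)x_i$ is a nontrivial linear form in $x_i,y_i$ with scalar coefficients, non-vanishing because $\{f_i,g_i\}$ has no common zeros.

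I would then conclude via the divisor decomposition argument of Theorem~\ref{teoRes}(ii)(b). For $\nu \gg 0$, the class
\[
[\det((\k.)_\nu)] = \div_{k[\X]}(\BB_\nu) = \sum_{\pp}\length_{k[\X]_\pp}((\BB_\nu)_\pp)[\pp]
\]
sums over height-one primes $\pp$ of $k[\X]$. By the previous paragraph, the height-one primes appearing in the support of $\BB_\nu$ are exactly $\qq = (H)$, coming from $\overline{Z_\Omega}$, and $\qq_{i,p} = (L_i(p))$ for $p\in X_{\hat i}$, coming from the extra fibers. The local length at $\qq$ equals $\deg(\phi)$ by the same generic-fiber computation as before, producing the factor $H^{\deg(\phi)}$. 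At each $\qq_{i,p}$ the local length is a non-negative integer $\mu_p$, producing the factor $L_i(p)^{\mu_p}$. Collecting these contributions yields the asserted factorization.

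The main obstacle is guaranteeing that the analysis exhausts every height-one prime in the divisor. This requires the complete intersection property of $Z$ to exclude embedded primes, a careful check that the stratification by the $U_\alpha$ covers every excess-dimensional fiber of $\pi_1$ that could contribute, and verification that $\pi_2$ maps each $\overline{\pi_1^{-1}(p)}$ birationally onto a divisor in $(\P1)^{n+1}$, which follows from $\dim\overline{\pi_1^{-1}(p)} = n = \dim(\P1)^{n+1} - 1$ together with the explicit product description above. Note that the statement does not require computing the $\mu_p$ explicitly, only their existence and non-negativity, which is automatic from the divisor interpretation.
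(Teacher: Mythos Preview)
Your proposal is correct and follows essentially the same approach as the paper: identify the extra irreducible components of $Z$ beyond $\overline{Z_\Omega}$ as the fibers $\pi_1^{-1}(p)$ over the finitely many points $p\in X_{\hat i}$, observe that each such fiber projects via $\pi_2$ onto the linear hypersurface $V(L_i(p))$, and read off the factorization from the divisor $[\det((\k.)_\nu)]$. Your version is in fact more explicit than the paper's (you verify finiteness of $X_{\hat i}$ and carry out the stratification by $U_\alpha$ in detail), whereas the paper simply invokes Remark~\ref{remHdivRes} and argues directly that the residual factor $G=\res/H^{\deg(\phi)}$ is supported on $\pi_2(Z-\Gamma)$, which is a union of the linear spaces $V(L_i(p))$; one small imprecision is that the nonvanishing of $L_i(p)$ follows from $p\notin V(I^{(i)})$ (since $X=\emptyset$), not from the regularity of $\{f_i,g_i\}$ alone.
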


\begin{proof}
 Denote by $\Gamma:=\overline{Z_\Omega}$ the closure of the graph of $\phi$, $Z$ as before. From Remark \ref{remHdivRes}, we can write 
\[
 G:=\frac{\res_A(L_0,\dots,L_n)}{H^{\deg(\phi)}},
\]
 the extra factor. It is clear that $G$ defines a divisor in $(\P1)^{n+1}$ with support on $\pi_2(Z-\Gamma)$. From the proof of Theorem \ref{teoRes}, we have that $Z$ and $\Gamma$ coincide outside $W\times (\P1)^{n+1}$. As $Z$ is defined by linear equations in the second group of variables, then $Z-\Gamma$ is supported on a union of linear spaces over the points of $W$, and so, its closure is supported on the union of the linear spaces $(\pi_1)^{-1}(p)\cong \{p\}\times ((\P1)^{n}\times \{*\})$, where $\{*\}$ is the point $(x:y)\in \P1$ such that $L_i(p,x,y)=0$ for suitable $i$. It follows that $\pi_2((\pi_1)^{-1}(p))\subset V(L_i)\subset (\P1)^{n+1}$, and consequently 
\[
 G=\prod_{p\in X}L_i(p)^{\mu_{p}},
\]
for some non-negative integers $\mu_p$.
\end{proof} 

As we said at the begining of this section, we are interested in understanding the extra factors. We have shown above that these factors depend directly on the dimension of each component of the base locus and on the dimension of its fibre; and that if their sum equals $n$ this generates an extra hypersurface, hence an extra factor. It is natural to ask, why this sum does not exceed $n$. The next lemma gives an answer to this question.

\medskip

\begin{lem}\label{LemaCodim}
 Let $\phi: \PP^n \dashrightarrow (\P1)^{n+1}$, be a finite rational map as above. Under the hypothesis of Theorem \ref{avramov}, $\codim_A(\sum_{j=0}^k I^{(i_j)})\geq k+1$ for all $k=0,\hdots,n$ and $\{i_0,\hdots,i_k\}\subset \{0,\hdots,n\}$.
\end{lem}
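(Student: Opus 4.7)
The plan is to produce, for each subset $\{i_0,\ldots,i_k\}$, a set-theoretic containment $V(\sum_{j=0}^k I^{(i_j)}) \subset V(I_r)$ for a suitably chosen $r$, and then invoke the codimension estimate $\codim_A(I_r) \geq (n+1)-r+1$ on the right-hand side, which is exactly what the hypothesis of Theorem \ref{avramov} gives us. Since codimension reverses inclusions (a smaller variety has larger codimension), the containment will immediately yield the desired lower bound on $\codim_A(\sum_{j=0}^k I^{(i_j)})$.

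The first step is to unwind $I_r := I_r(M)$ explicitly for $M$ as in \eqref{matrizota}. Because the columns of $M$ have pairwise disjoint supports on the row pairs $\{2i,2i+1\}$, every nonzero $r \times r$ minor is, up to sign, a product $\prod_{s \in S}(\text{one of } f_s,\,g_s)$ indexed by a subset $S \subset \{0,\ldots,n\}$ with $|S|=r$. Thus $I_r = \sum_{|S|=r}\prod_{s \in S} I^{(s)}$ and, taking zero sets,
\[
V(I_r) = \bigcap_{|S|=r,\ S \subset \{0,\ldots,n\}} \bigcup_{s \in S} V(I^{(s)}).
\]
This is exactly the computation already used in the implication (i)\eqref{teo-dep}$\Leftrightarrow$(i)\eqref{teo-cod} of Theorem \ref{teoRes}.

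The second step is the combinatorial heart: choose $r := n-k+1$. Since the complement of $\{i_0,\ldots,i_k\}$ inside $\{0,\ldots,n\}$ has $(n+1)-(k+1) = n-k$ elements, every subset $S \subset \{0,\ldots,n\}$ of size $r = n-k+1$ must meet $\{i_0,\ldots,i_k\}$ by pigeonhole. Hence for any $p \in V(\sum_{j=0}^k I^{(i_j)}) = \bigcap_{j=0}^{k} V(I^{(i_j)})$ and any such $S$, picking $i_{j_0} \in S \cap \{i_0,\ldots,i_k\}$ yields $p \in V(I^{(i_{j_0})}) \subset \bigcup_{s \in S} V(I^{(s)})$. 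Intersecting over all $S$ of size $r$ produces $V(\sum_{j=0}^k I^{(i_j)}) \subset V(I_{n-k+1})$.

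Combining the containment with the Avramov bound then gives
\[
\codim_A\Bigl(\sum_{j=0}^k I^{(i_j)}\Bigr) \geq \codim_A(I_{n-k+1}) \geq (n+1)-(n-k+1)+1 = k+1,
\]
which is the desired inequality. I do not foresee any substantive obstacle: the only judgement call is the choice of $r$, and $r = n-k+1$ is essentially forced, being simultaneously the smallest index for which every size-$r$ subset of $\{0,\ldots,n\}$ must hit $\{i_0,\ldots,i_k\}$ and the largest index for which the corresponding Avramov bound still gives $\codim_A(I_r) \geq k+1$.
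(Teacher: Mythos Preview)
Your argument is correct, but it proceeds along a genuinely different line from the paper's.

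The paper argues as follows: the Avramov hypothesis on the full matrix $M$ is equivalent to $(L_0,\ldots,L_n)$ being a regular sequence in $R$; any subsequence $(L_{i_0},\ldots,L_{i_k})$ is then still regular, hence a complete intersection in $A[x_{i_0},y_{i_0},\ldots,x_{i_k},y_{i_k}]$; now apply Avramov's theorem \emph{to this smaller $2(k+1)\times(k+1)$ matrix} with $r=1$, where $I_1$ of the small matrix is exactly $I^{(i_0)}+\cdots+I^{(i_k)}$, to get $\codim_A\bigl(\sum_j I^{(i_j)}\bigr)\ge (k+1)-1+1=k+1$.

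Your route is more elementary in that you never leave the full matrix $M$ and you use only the \emph{hypothesis} of Avramov's theorem (the codimension bounds on $I_r(M)$), not its equivalence with the complete intersection property. The pigeonhole step showing $I_{n-k+1}(M)\subset \sum_{j=0}^k I^{(i_j)}$ (equivalently $V(\sum_j I^{(i_j)})\subset V(I_{n-k+1})$) is the combinatorial substitute for the paper's ``pass to a subsequence and re-apply Avramov''. What the paper's approach buys is conceptual: it makes transparent that the lemma is just Avramov's criterion read on the sub-system, with no bookkeeping on which index $r$ to pick. What your approach buys is that it avoids invoking both directions of Avramov's equivalence and works directly from the given codimension inequalities.
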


\begin{proof}
To show this we will use Theorem \ref{avramov}. Let us denote by $I$ the ideal $I^{(i_0)}+\cdots+I^{(i_n)}$, and for a fixed $k$ write $I=I'+I''$, where $I'=\sum_{j=0}^k I^{(i_j)}$ and $I''=\sum_{l=k+1}^n I^{(i_l)}$ the sum over the complementary indices of $\{i_0,\hdots,i_k\}$. As $(L_0,\hdots,L_n)$ is a complete intersection in $R$, also is $(L_{I_0},\hdots,L_{i_k})$ in $A[x_{i_0},y_{i_0},\hdots,x_{i_k},y_{i_k}]$. Applying Theorem \ref{avramov} to the ideal $(L_{I_0},\hdots,L_{i_k})$, for $r=1$ we have that $\codim_A(I^{(i_0)}+\cdots+I^{(i_k)})\geq k+1$.

Observe that as $I'$ is generated by a subset of the set of generator of $I$ then $I'$ is also a complete intersection in $R$. Now, as it is generated by elements only depending on the variables $x_{i_j},y_{i_j}$ for $j=0,\hdots,k$, we have that it is also a complete intersection in $A[x_{i_0},y_{i_0},\hdots,x_{i_k},y_{i_k}]$.
\end{proof}

\begin{rem}
 It is easy to see that the converse of Lemma \ref{LemaCodim} does not always hold (in fact almost never). For example let us take for $n=1$, $f_0=g_0=f_1=g_1\neq 0$.
\end{rem}

\medskip

We will now introduce some notation we will use to state and prove Theorem \ref{teoResGral}. The idea behind these definitions is to split the base locus into irreducible subvarieties with the desired dimension, in order to obtain a clear factorization of the extra component that appears.

\medskip

\begin{defn}
For each $\alpha:=(i_0,\hdots,i_k)$ set $I^{\alpha}:=\sum_{j=0}^k I^{(i_j)}$, and $X_{\alpha}:=\Proj(A/I^{\alpha})$ as defined in \eqref{Xalpha}. Denote by $\Theta$ the set of all $\alpha\subset \{0,\hdots,n\}$ such that $\codim(I^{\alpha})=|\alpha|$.

For each $\alpha:=(i_0,\hdots,i_k) \in \Theta$, let $I^\alpha=(\cap_{\qq_i\in \Lambda_\alpha} \qq_i)\cap \qq'$, where $\Lambda_\alpha$ is the set of primary ideals of codimension $|\alpha|$ containing $I$, and $\codim_A(\qq')>|\alpha|$. Write
\begin{equation}
 X_{\alpha,i}:=\Proj(A/\qq_i), \qquad \textnormal{ with } \qq_i\in \Lambda_\alpha,
\end{equation}
and let $X_{\alpha,i}^{red}$ be the associated reduced variety. Denote by $\pi_\alpha$ the projection onto the coordinates $(x_i:y_i)$, for $i\notin \alpha$. Namely, let $\{i_{k+1},\hdots,i_n\}:=\{1,\hdots,n\}-\alpha$, then $\pi_\alpha$ is given by 
\begin{equation}
\begin{array}{rcl} 
 \pi_\alpha: (\P1)^{n+1}&\to &(\P1)^{n+1-|\alpha|} \\
(x_0:y_0)\times \hdots \times(x_n:y_n)&\mapsto &(x_{i_{k+1}}:y_{i_{k+1}})\times \hdots \times(x_{i_{n}}:y_{i_{n}}).
\end{array}
\end{equation}
Set $\phi_{\alpha}:\PP^n\dto \PP_\alpha:= \pi_{\alpha}((\P1)^{n+1})$ defined as $\phi_{\alpha}:= \pi_{\alpha}\circ \phi$. Hence, $\phi_{\alpha}$ is explicitly defined as
 $\phi_{\alpha}(\t)=(f_{i_{k+1}}(\t):g_{i_{k+1}}(\t))\times \hdots \times(f_{i_{n}}(\t):g_{i_{n}}(\t))$ as is shown in the diagram below
\[
 \xymatrix@1{
 \PP^n\ar@{-->}[r]^{\phi}\ar@{-->}[rd]^{\phi_{\alpha}}& (\P1)^{n+1}\ar^{\pi_{\alpha}}[d]&\\
 &  **[r]\PP_\alpha:= \pi_{\alpha}((\P1)^{n+1}) &\ \ \ \cong (\P1)^{n+1-|\alpha|}. }
\]
 Denote by $W_\alpha$ the base locus of $\phi_{\alpha}$ and $X:=\Proj(A/\sum_{i}I^{(i)})$. Since $W:=\Proj(A/\prod_{i}I^{(i)})$ is the base locus of $\phi$ as defined in (\ref{WX}), we have the inclusions $X\subset W_{\alpha}\subset W$. 

Denote by $\UU_{\alpha}:=\PP^n-W_{\alpha}$, the open set where $\phi_{\alpha}$ is well defined. Write $\Omega_{\alpha}:= X_{\alpha}\cap \UU_{\alpha}$ and $\Omega_{\alpha,i}:= X_{\alpha,i}\cap \UU_{\alpha}$. The open set $\Omega_{\alpha,i}$ stands for the points of $\PP^n$ which give rise to an extra factor. This factor coincides with the restricted image of $\phi$ over $\Omega_{\alpha,i}$ to a certain power. 

If $\alpha$ is empty, we set $\pi_\alpha=Id_{(\P1)^{n+1}}$, $\phi_\alpha=\phi$, $W_\alpha=W$ and $\UU_{\alpha}=\Omega_{\alpha}=\Omega$.
\end{defn}

\medskip

Next, we show that this meticulous decomposition of the base locus has the properties we are looking for. Namely, the fibre over each of the open sets $\Omega_{\alpha,i}$ has dimension $|\alpha|$, in such a way that the sum of the dimensions of the base plus the dimension of the fiber is $n$.

\medskip

\begin{lem}\label{LemBundle}
 Let $\phi$ be as in Theorem \ref{teoRes}. For each $\alpha \in \Theta$ and each $\qq_i\in \Lambda_\alpha$, the following statements are satisfied:
\begin{enumerate}
 \item $\Omega_{\alpha,i}$ is non-empty.
 \item for all $p\in \Omega_{\alpha,i}$, $\dim(\pi_1^{-1}(p))=|\alpha|$.
 \item \label{bundle} the restriction $ \phi_{\alpha,i}$ of $\phi$ to $\Omega_{\alpha,i}$, defines a rational map 
\begin{equation}\label{phiai}
 \phi_{\alpha,i}:X_{\alpha,i}\dto \PP_\alpha\cong (\P1)^{n+1-|\alpha|}.
\end{equation}
\item $Z_{\alpha,i}:=\pi_1^{-1}(\Omega_{\alpha,i})$ is a (multi)projective bundle $\EEE_{\alpha,i}$ of rank $|\alpha|$ over $\Omega_{\alpha,i}$.
\end{enumerate}
\end{lem}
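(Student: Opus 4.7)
\medskip

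\noindent\textbf{Proof plan for Lemma \ref{LemBundle}.}

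The plan is to reduce all four items to an explicit pointwise computation of the fibre $\pi_1^{-1}(p)$, together with a codimension argument that uses Lemma \ref{LemaCodim} to keep $X_{\alpha,i}$ from being swallowed by the rest of the base locus. First I would settle (1): since $\qq_i$ is a minimal primary component of $I^\alpha$ of codimension exactly $|\alpha|$, $X_{\alpha,i}$ is an irreducible closed subscheme of $\PP^n$ of dimension $n-|\alpha|$. Set-theoretically $W_\alpha=\bigcup_{l\notin \alpha}V(I^{(l)})$, so $X_{\alpha,i}\subset W_\alpha$ would force $X_{\alpha,i}\subset V(I^{(l)})$ for some $l\notin\alpha$ by irreducibility; but then $V(I^\alpha+I^{(l)})\supset X_{\alpha,i}$, giving $\codim_A(I^{\alpha\cup\{l\}})\le |\alpha|$, contradicting Lemma \ref{LemaCodim} which forces $\codim_A(I^{\alpha\cup\{l\}})\ge|\alpha|+1$. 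Hence $\Omega_{\alpha,i}=X_{\alpha,i}\cap \UU_\alpha$ is a non-empty open subset of $X_{\alpha,i}$.

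The core of the argument is (2), and (4) falls out of the same computation. Fix $p\in\Omega_{\alpha,i}$. For every $i_j\in\alpha$ one has $p\in V(I^{(i_j)})$, so $f_{i_j}(p)=g_{i_j}(p)=0$ and the equation $L_{i_j}(p,x,y)=f_{i_j}(p)y_{i_j}-g_{i_j}(p)x_{i_j}$ vanishes identically in $(x_{i_j}:y_{i_j})$, contributing a full $\P1$ factor to the fibre. For every $l\notin\alpha$, since $p\in\UU_\alpha$ we have $(f_l(p),g_l(p))\neq(0,0)$, so $L_l(p,x,y)=0$ cuts out the single point $(f_l(p):g_l(p))\in\P1$. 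Thus
\[
\pi_1^{-1}(p)=\{p\}\times\prod_{i_j\in\alpha}\P1\times\prod_{l\notin\alpha}\{(f_l(p):g_l(p))\},
\]
which is irreducible of dimension $|\alpha|$, proving (2). Doing this fibrewise over $\Omega_{\alpha,i}$ identifies $Z_{\alpha,i}=\pi_1^{-1}(\Omega_{\alpha,i})$ with the graph of the section $p\mapsto((f_l(p):g_l(p)))_{l\notin\alpha}$, crossed with $\prod_{i_j\in\alpha}\P1$; this is the trivial multiprojective bundle of rank $|\alpha|$ over $\Omega_{\alpha,i}$, giving (4).

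For (3) I would simply observe that $\phi_\alpha=\pi_\alpha\circ\phi$ is well defined on $\UU_\alpha\supset\Omega_{\alpha,i}$, because its $l$-th coordinate $(f_l:g_l)$ ($l\notin\alpha$) is defined precisely where $p\notin V(I^{(l)})$, i.e.\ on $\UU_\alpha$; restricting this morphism to the irreducible subvariety $X_{\alpha,i}$ and taking the natural open domain $\Omega_{\alpha,i}$ yields the rational map $\phi_{\alpha,i}:X_{\alpha,i}\dto \PP_\alpha$ of \eqref{phiai}.

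I expect the only subtle step to be (1), because one has to be careful to distinguish $W$ from $W_\alpha$ and to apply Lemma \ref{LemaCodim} to the enlarged index set $\alpha\cup\{l\}$ rather than to $\alpha$ itself; once that codimension bound rules out the bad inclusions, everything else is the bookkeeping of the pointwise fibre calculation above, and the bundle structure in (4) is trivial in the literal sense (it is a globally split product), so no descent or cohomological input is required.
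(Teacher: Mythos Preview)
Your proposal is correct and follows essentially the same route as the paper: both argue (1) by irreducibility of $X_{\alpha,i}$ together with the codimension bound $\codim_A(I^{\alpha\cup\{l\}})\ge |\alpha|+1$ (you cite Lemma \ref{LemaCodim} explicitly, the paper just says ``contradicts the hypothesis''), and both obtain (2)--(4) from the same explicit computation of $\pi_1^{-1}(p)$ as $\{p\}\times(\P1)^{|\alpha|}\times\prod_{l\notin\alpha}\{(f_l(p):g_l(p))\}$. Your remark that the bundle in (4) is globally trivial is a small sharpening of what the paper states.
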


\begin{proof}
Fix $X_{\alpha,i}\subset X_{\alpha}$ and write $\alpha:=i_0,\hdots,i_k$. Note that $\Omega_{\alpha,i}:=X_{\alpha,i}-\bigcup_{j\notin \alpha}X_{\{j\}}$ is an open subset of $X_{\alpha,i}$. If $\Omega_{\alpha,i}=\emptyset$, then $X_{\alpha,i}\subset \bigcup_{j\notin \alpha}X_{\{j\}}$, and since it is irreducible, there exists $j$ such that $X_{\alpha,i}\subset X_{\{j\}}$, hence $X_{\alpha,i}\subset X_{\{j\}}\cap X_\alpha= X_{\alpha\cup \{j\}}$. Setting $\alpha':= \alpha\cup \{j\}$, it follows that $\dim(X_{\alpha'})\geq \dim(X_{\alpha,i})=n-|\alpha|>n-|\alpha'|$, which contradicts the hypothesis.

Let $p\in \Omega_{\alpha,i}$, $\pi_1^{-1}(p) = \{p\}\times\{q_{i_{k+1}}\}\times\hdots\{q_{i_{n}}\}\times(\P1)^{|\alpha|}$, where the point $q_{i_{j}}\in \P1$ is the only solution to the nontrivial equation $L_{i_{j}}(p,x_{i_{j}},y_{i_{j}})=y_{i_{j}}f_{i_{j}}(p)-x_{i_{j}}g_{i_{j}}(p)=0$. Then we deduce that $\dim(\pi_1^{-1}(p))=|\alpha|$, and that $\phi_{\alpha,i}:\Omega_{\alpha,i}\to \prod_{j=k+1}^{n}\P1_{(i_j)}\cong (\P1)^{n+1-|\alpha|}$ given by $p\in \Omega_{\alpha,i}\mapsto \{q_{i_{k+1}}\}\times\hdots\times\{q_{i_{n}}\}\in \prod_{j=k+1}^{n+1}\P1_{(i_j)}$, is well defined.

The last statement follows immediately from the previous ones. 
\end{proof}

Note that $X_{\alpha,i}$ has dimension $n-|\alpha|$, by the preceding lemma $\dim(\pi_1^{-1}(\Omega_{\alpha,i}))=n$. Since $\res_A(L_0,\hdots,L_n)$ describes the codimension one part of $\pi_2(Z)$, if $\dim(\pi_2(\pi_1^{-1}(\Omega_{\alpha,i})))=n$, then $\res_A(L_0,\hdots,L_n)$ is not irreducible. Denote by $\Delta_\alpha\subset \Lambda_\alpha$ the subset of primary ideals $\qq_i$ satisfiying  $\dim(\pi_2(\pi_1^{-1}(X_{\alpha,i})))=n$. Observe that if $|\alpha|=n$ we are in the case of Proposition \ref{casoFibraNdim}. The following diagram illustrates this situation:
\[
\xymatrix@1{& & Z\ \ar@{^{(}->}[rr]\ar[dd]^{\pi_1}\ar[rrdd]^{\pi_2}& & \PP^n\times  (\P1)^{n+1}\\
 \pi_1^{-1}(p)\  \ar@{-}[d]\ar@{^{(}->}[r] & \ \EEE_{\alpha,i} \ \ \ar@{-}[d] \ar[ddrr]\ar@{^{(}->}[ur]& & &\\
 \{p\} \ \ar@{^{(}->}[r] & \ \ \Omega_{\alpha,i} \ar@{^{(}->}[d]\ar[rrd]^{\phi_{\alpha,i}} \ \ar@{^{(}->}[r]  &\PP^n\ar@{-->}[rr]^{\phi} & & (\P1)^{n+1}\ar^{\pi_{\alpha}}[d].\\
 & X_{\alpha,i}\ar@{^{(}->}[ur]\ \ar@{-->}[rr]^{\phi_{\alpha,i}} & & \pi_2(\EEE_{\alpha,i}) \  \ar@{^{(}->}[r]& (\P1)^{n+1-|\alpha|}, }
\]
where in this case $\pi_1^{-1}(p) = \{p\}\times\{q_{i_{k+1}}\}\times\hdots\times\{q_{i_{n}}\}\times(\P1)^{|\alpha|}$ is the $|\alpha|$-dimensional fibre of $\EEE_{\alpha,i}$ over $p$, for some $q_{i_j}\in \P1$. 

We finally state our general result.

\medskip

\begin{thm}\label{teoResGral} Let $\phi: \PP^n\dashrightarrow (\P1)^{n+1}$ be a finite rational map, satisfying the hypotheses of Theorem \ref{teoRes} and conditions $(i.a)-(i.c)$ in that theorem, hence $\RRes\neq 0$. Denote by $H$ the irreducible implicit equation of the closure of the image of $\phi$, and by $H_{\alpha,i}$ the irreducible implicit equation of the closure of the image of $\phi_{\alpha,i}$ defined in (\ref{phiai}) for each $\alpha \in \Theta$, and $i$ such that $\qq_i\in \Delta_\alpha$. 

Then, there exist positive integers $\mu_{\alpha,i}$ such that:
\[
 \res_A(L_0,\hdots,L_n)=H^{\deg(\phi)}\cdot \prod_{\alpha\in \Theta}\prod_{i: \qq_i\in \Delta_\alpha} (H_{\alpha,i})^{\mu_{\alpha,i}\deg(\phi_{\alpha,i})}.
\]
\end{thm}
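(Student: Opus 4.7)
The plan is to follow closely the divisor-theoretic argument at the end of the proof of Theorem \ref{teoRes}, this time keeping track of every irreducible component of $Z$ that projects under $\pi_2$ onto a hypersurface in $(\P1)^{n+1}$. First, since $\k.$ resolves $\BB$, the equations $L_0,\hdots,L_n$ form a regular sequence in $R$, so $Z=\MProj(\BB)$ is a complete intersection and hence unmixed and purely of dimension $n$. One irreducible component is $\overline{Z_\Omega}$; all others must lie over the base locus $W$. For every pair $(\alpha,i)$ with $\alpha\in\Theta$ and $\qq_i\in\Lambda_\alpha$, Lemma \ref{LemBundle} shows that $\pi_1^{-1}(\Omega_{\alpha,i})$ is a (multi)projective bundle $\EEE_{\alpha,i}$ of rank $|\alpha|$ over an $(n-|\alpha|)$-dimensional base, so its closure is $n$-dimensional. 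The first task is to check, via the primary decomposition of the ideals $I^\alpha$, that these closures $\overline{\EEE_{\alpha,i}}$ exhaust the remaining components of $Z$.

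Next I would single out which of these components descend to divisors under $\pi_2$. Since $\pi_2$ is proper, each $\pi_2(\overline{\EEE_{\alpha,i}})$ is closed of dimension at most $n$, and by the definition of $\Delta_\alpha$ it is a hypersurface exactly when $\qq_i\in\Delta_\alpha$. In that case Lemma \ref{LemBundle}(\ref{bundle}) realises $\phi_{\alpha,i}$ as $\pi_\alpha\circ\pi_2\circ(\pi_1|_{\EEE_{\alpha,i}})^{-1}$, so $\pi_2(\overline{\EEE_{\alpha,i}})$ is the irreducible hypersurface $V(H_{\alpha,i})\subset(\P1)^{n+1}$. Consequently the only height-one primes of $k[\X]$ that can appear with non-zero length in
\[
[\det((\k.)_\nu)]=\div_{k[\X]}(\BB_\nu)=\sum_{\mathrm{ht}(\pp)=1}\length_{k[\X]_\pp}((\BB_\nu)_\pp)\,[\pp]
\]
are $(H)$ and the primes $(H_{\alpha,i})$ with $\alpha\in\Theta$ and $\qq_i\in\Delta_\alpha$; all other $\overline{\EEE_{\alpha,i}}$ project to subvarieties of codimension $\geq 2$ and contribute nothing to the divisor.

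The remaining task is to compute these lengths for $\nu\gg 0$. At $(H)$, the computation in Theorem \ref{teoRes} yields $\deg(\phi)$. At a generic point of $V(H_{\alpha,i})$ I would pick a fibre of $\pi_2\colon \overline{\EEE_{\alpha,i}}\to(\P1)^{n+1}$ that splits as a disjoint union of $\deg(\phi_{\alpha,i})$ bundle fibres of dimension $|\alpha|$, and express $(\BB_\nu)_{(H_{\alpha,i})}$ accordingly; this produces a length of the form $\mu_{\alpha,i}\deg(\phi_{\alpha,i})$, where $\mu_{\alpha,i}$ is the length of the local ring of $Z$ at the generic point of $\overline{\EEE_{\alpha,i}}$, equivalently the scheme-theoretic multiplicity of the primary component $\qq_i$ in $I^\alpha$ combined with the transverse contribution of the $L_j$, $j\notin\alpha$. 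Substituting these values into the divisor identity above gives the claimed factorization.

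The hard part will be the final length computation: one must verify that $\mu_{\alpha,i}$ is a well-defined positive integer and that no two components $\overline{\EEE_{\alpha,i}}$ with a common image can conspire to cancel. Establishing this requires a careful local analysis at the generic point of $\overline{\EEE_{\alpha,i}}$, using that $Z$ is locally cut out by the regular subsequence $(L_j)_{j\notin\alpha}$ on the bundle $\EEE_{\alpha,i}$, together with a B\'ezout-type count on the generic fibre of the restricted map $\phi_{\alpha,i}\colon X_{\alpha,i}\dto V(H_{\alpha,i})$.
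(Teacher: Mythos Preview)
Your proposal is essentially the paper's own argument: both decompose $Z$ into $\overline{Z_\Omega}$ together with the bundle closures $\overline{\EEE_{\alpha,i}}$ via Lemma \ref{LemBundle}, single out those with $\qq_i\in\Delta_\alpha$ as the ones contributing codimension-one images, and then read off the exponents from the divisor identity $[\det((\k.)_\nu)]=\div_{k[\X]}(\BB_\nu)$ by computing the length at each relevant prime. The paper defines $\mu_{\alpha,i}$ by writing $[\overline{\EEE_{\alpha,i}}]=\mu_{\alpha,i}\cdot[\overline{\EEE_{\alpha,i}^{\,red}}]$ and then uses $(\pi_2)_*[\overline{\EEE_{\alpha,i}}]=\mu_{\alpha,i}\deg(\phi_{\alpha,i})\,[\pp_{\alpha,i}]$, which is exactly your ``length of the local ring of $Z$ at the generic point'' interpretation; your extra care about exhausting the components and ruling out cancellation is sound but not something the paper spells out.
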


\begin{proof}
 Denote by $\Gamma:=\overline{Z_\Omega}$, the graph of $\phi$, $Z$ the incidence scheme, and $G=\frac{\res_A(L_0,\dots,L_n)}{H^{\deg(\phi)}}$ the extra factor.

As in Proposition \ref{casoFibraNdim}, $G$ defines a divisor in $(\P1)^{n+1}$ with support on $\pi_2(Z-\Gamma)$, and $Z$ and $\Gamma$ coincide outside $W\times (\P1)^{n+1}$. 

By part \eqref{bundle} of Lemma \ref{LemBundle}, for each $\alpha$ and each $\qq_i\in \Delta_\alpha\subset\Lambda_\alpha$, $\phi_{\alpha,i}$ defines a (multi)projective bundle $\EEE_{\alpha,i}$ of rank $|\alpha|$ over $\Omega_{\alpha,i}$. 
By definition of $\Delta_\alpha$, $\overline{\pi_2(\EEE_{\alpha,i})}$ is a closed subscheme of $(\P1)^{n+1}$ of codimension $1$. Denoting by $[\overline{\EEE_{\alpha,i}}]={\mu_{\alpha,i}}.[\overline{\EEE_{\alpha,i}^{\ red}}]$ the class of $\overline{\EEE_{\alpha,i}}$ as an algebraic cycle of codimension $n+1$ in $\PP^n\times(\P1)^{n+1}$, we have $(\pi_2)_*[\overline{\EEE_{\alpha,i}}]= {\mu_{\alpha,i}}.(\pi_2)_*[\overline{\EEE_{\alpha,i}^{\ red}}]= {\mu_{\alpha,i}}.\deg(\phi_{\alpha,i}).[\pp_{\alpha,i}]$, where $\pp_{\alpha,i}:=(H_{\alpha,i})$. 

As in Theorem \ref{teoRes}, one has for $\nu>\eta$:
\[
[\det((\k.)_\nu)]=\div_{k[\X]}(H_0(\k.)_\nu)=\div_{k[\X]}(\BB_\nu)=\sum_{\pp \textnormal{ prime, }\codim_{k[\X]}(\pp)=1}\length_{k[\X]_\pp} ((\BB_\nu)_\pp)[\pp].
\]
We obtain that
\[
 [\det((\k.)_\nu)]= \sum_{\alpha\in \Theta} \sum_{\pp_{\alpha,i}} \length_{k[\X]_{\pp_{\alpha,i}}} ((\BB_\nu)_{\pp_{\alpha,i}})[\pp_{\alpha,i}]+\length_{k[\X]_{(H)}} ((\BB_\nu)_{(H)})[(H)].
\]

In the formula above, for each $\pp_{\alpha,i}$ we have
\[ 
\length_{k[\X]_{\pp_{\alpha,i}}} ((\BB_\nu)_{\pp_{\alpha,i}})=\dim_{\kk(\pp_{\alpha,i})}{(\BB_\nu\otimes_{k[\X]_{\pp_{\alpha,i}}} \kk(\pp_{\alpha,i}))}={\mu_{\alpha,i}}.\deg(\phi_{\alpha,i}),
\]
where $\kk(\pp_{\alpha,i}):=k[\X]_{\pp_{\alpha,i}}/\pp_{\alpha,i} k[\X]_{\pp_{\alpha,i}}$. Consequently we get that for each $\alpha\in \Theta$, there is a factor of $G$, denoted by $H_{\alpha,i}$, that corresponds to the irreducible implicit equation of the scheme theoretic image of $\phi_{\alpha,i}$, raised to a certain power $\mu_{\alpha,i}.\deg(\phi_{\alpha,i})$.
\end{proof}

\begin{rem}
It is important to remark that the set-theoretic approach doesn't tell us anything about the scheme structure of the fibre $(\pi_1)^{-1}(W)$. Namely the bijection 
\[
\bigcup_\alpha (X_\alpha\times ((\P1)^{|\alpha|}\times \{*\}\times \hdots \times \{*\}) ) \stackrel{\sim}{\lto} (\pi_1)^{-1}(W)
\]
 is not necessarily a scheme isomorphism; for instance, $\EEE_{\alpha,i}$ need not be well-defined over all $X_{\alpha,i}$, because the multiplicities of the components of $X_{\alpha,i}$ of $\Proj(A/I^\alpha)$ are not necessarily preserved by $\pi_1^{-1}$. The fiber bundle $\EEE_{\alpha,i}$ is defined over the relative open set $\Omega_{\alpha,i}$ of $X_{\alpha,i}\subset \Proj(A/I^\alpha)$, but the dimension of the fibre can increase on $X_{\alpha,i}-\Omega_{\alpha,i}$.
\end{rem}

\medskip

\begin{rem}\label{remBoundMu}Let us discuss briefly how the exponents $\mu_{\alpha,i}$ behave in the particular case where $X_{\alpha,i}$ is a complete intersection subscheme of $\PP^n$. Take $\alpha\in \Theta$, defined as above, write $\alpha=\{i_0,\hdots,i_k\}$, and fix $\pp_i\in \Lambda_\alpha$. As $\codim(I^{\alpha,i})=|\alpha|$, we have $\dim_{\PP^n}(X_{\alpha,i})=n-|\alpha|=n-(k+1)=:m$, and assume $X_{\alpha,i}$ is irreducible.

Take $G_0,\hdots,G_{k}$ irreducible generators of $I^{\alpha,i}$ (recall $X_{\alpha,i}$ is a complete intersection), and $G_j:=L_{i_j}$ for $j\in \{k+1,\hdots,n\}$. 

As $G_0,\hdots,G_{k}$ vanish over $X_{\alpha,i}$, the element $\res(G_0,\hdots,G_n)\in k[x_{i_{k+1}},y_{i{k+1}},\hdots,x_{i_n},y_{i_n}]$ describes exactly the irreducible implicit equation, $H_{\alpha,i}$, of the scheme theoretic image, $\HH_{\alpha,i}\subset \P1_{i_{k+1}}\times \hdots \times \P1_{i_n}$, of the restricted and correstricted map $\phi_{\alpha,i}$.

Assume that $L_j$ lies in $G:=(G_0,\hdots,G_n)^{\mu_j}$, and that $\mu_j$ is maximum with this property, for all $j$. Then, from the ``Lemme de divisibilit\'e g\'en\'eral" by J.-P. Jouanolou (see \cite[Prop. 6.2.1]{Jou3}), we have $\res(G_0,\hdots,G_n)=H_{\alpha,i} ^{\prod_j\mu_j}$. As this polynomial divides $H_{\alpha,i} ^{\mu_{\alpha,i}}$, $\prod_j\mu_j$ gives a lower bound for $\mu_{\alpha,i}$.

Remark also that $L_{i_j}$ always lies in the ideal $G:=(G_0,\hdots,G_n)^{\mu_j}$ for $\mu_{i_j}=1$ by definition, when $j>k$. 
\end{rem}


\section{Examples}

In this section we present several examples where we illustrate the theory developed in the previous sections. These computations were done in \textsl{Maple 11}, by means of the routines implemented in the \textsl{Multires} developed by Galaad Team at INRIA, cf. \cite{GalMpl}.

\medskip

\begin{exmp}
In this example we show what happens when there is a point whose fibre has dimension $2$.

{\small \begin{verbatim}

> read"multires.mpl": with(linalg):
> f0 := u: g0 := v: f1 := u^2: g1 := v^2: f2 := v^2: g2 := w^2:
                                2       2              2       2
L0 := x0 v - y0 u,    L1 := x1 v  - y1 u,    L2 := x2 w  - y2 v

> M012w:=det(mresultant([L0,L1,L2],[u,v])); 

                               2  4          2        2 2
                   M012w := -x2  w  y1 (y1 x0  - x1 y0 )  y2

\end{verbatim}
}
As \textsl{mresultant} gives a multiple of the desired Macaulay resultant, computing the greatest common divisor over all permutations of $L_0, L_1, L_2$ we get that $Mw$ should be:

{\small \begin{verbatim}

                            2  4       2        2 2
                    Mw := x2  w  (y1 x0  - x1 y0 )

\end{verbatim}
}
It remains to observe that $x_2^2$ correspond to the equation $L_2(p)=0$, where $p=(0:0:1)$ is the point with $2$-dimensional fibre, and use the fact that $p$ has multiplicity $2$.
\end{exmp}

\medskip

\begin{exmp}
In this example we study the case of a two point ($p_1$ and $p_2$) base locus, where the fibre above them has dimension $2$. Take $p_1=(1:0:0)$ and $p_2=(0:0:1)$. The computation below shows that in this case the methed yields a power of the irreducible implicit equation with some extra factors. Those factors are powers of $y_1$ and $x_2$. 

Why $y_1$ and $x_2$? If we look at the linear equations $L_0$, $L_1$, $L_2$ evaluated in the two points, $p_1$, $p_2$, we see that $L_0(p_1)=L_2(p_1)=0$ and $L_1(p_1)=y_1$. If we do the same with $p_2$ we get $L_0(p_2)=L_1(p_2)=0$ and $L_2(p_2)=x_2$.

{\small \begin{verbatim}

> f0 := u*w: g0 := v^2: f1 := u^2: g1 := v^2: f2 := v^2: g2 := w^2: 
> L0:=x0*g0-y0*f0: L1:=x1*g1-y1*f1: L2:=x2*g2-y2*f2: 

\end{verbatim}
}
As we did before, we compute the greatest common divisor over all the permutation of $L_0, L_1, L_2$. The ouput $Mw$, $Mv$ and $Mu$ is:
{\small \begin{verbatim}

                           2  8   3           2        2    2
                    Mw:= x2  w  y1  (-y2 x1 y0  + x2 x0  y1)

                          8   3   4           2        2    2
                    Mv:= v  y1  x2  (-y2 x1 y0  + x2 x0  y1)

                           2  8   4           2        2    2
                    Mu:= y1  u  x2  (-y2 x1 y0  + x2 x0  y1)

\end{verbatim}
}
The resultant can be obtained as the gcd of the three equations above.
\end{exmp}

\medskip

\begin{rem}
In the spirit of Remark \ref{remBoundMu} take $\alpha=\{0,1\}$, that is, $X_\alpha=V(f_0,g_0,f_1,g_1)=V(u,v)$, and consider $G_0=u$, $G_1=v$, and $G_2=L_2=y_2v^2-x_2w^2$. By the multiplicativity of the resultant (see for instance \cite[Sec. 5.7]{Jou3}) we have $\res(G_0,G_1,G_2)=\res(u,v,-x_2w^2)=-x_2$. Now, as $L_0\in G:=(G_0,G_1,G_2)$, but not in $G^2$, and $L_1\in G^2$, but not in $G^3$, we have $\mu_0=1$ and $\mu_1=2$. Hence, $\res(G_0,G_1,G_2)^2=x_2^2$ divides $\res(L_0,L_1,L_2)$, as the above computation showed. Moreover, in this case, we see that $\res(G_0,G_1,G_2)^2=x_2^2$ coincides exactly with the extra factor.
\end{rem}

\medskip

\begin{exmp}
In this example we study the situation where the fibre along a $1$-dimensional closed subscheme of $\PP^2$ is $\P1$, and is $\PP^2$ above the points $(1:0:0)$ and $(0:0:1)$. 

{\small \begin{verbatim}

> f0 := u*v: g0 := u*w: f1 := u^2+v^2: g1 := v^2: f2 := v^2: g2 :=w^2: 
> L0:=x0*g0-y0*f0; L1:=x1*g1-y1*f1; L2:=x2*g2-y2*f2; 

                      8   2   3             2     2        2    2
               Mw:=  w  x2  y1  y2 (x1 - y1)  (-x0  y2 + y0  x2)

                      8   4   3          2     2        2    2
               Mv:=  v  x2  y1  (x1 - y1)  (-x0  y2 + y0  x2)

                      8   2     2        2    2
               Mu:=  u  y1  (-x0  y2 + y0  x2)

\end{verbatim}
}

This last equation corresponds to the situation on the open set $u\neq 0$. Is clear that the extra factor $y_1$ is appearing because of the $2$ dimensional fibre above $p=(1:0:0)$, where $L_0(p)=L_2(p)=0$, and $L_1(p)=u^2 y_1$. Similarly $x_2$ appears because of the $2$ dimensional fibre above the point $q=(0:0:1)$, where $L_0(q)=L_1(q)=0$, and $L_2(q)=w^2 x_2$, as was shown in the second example. The other factor that appears is $(x_1-y_1)^2$, and it is due to the existence of a $1$ dimensional closed subvariety $V$. In this case, we see that the dimension of the fibre over a point $p$ is $1$, for any $p$ in a suitable relative open subset of $V$. 

Precisely, we consider the fibre along the closed subvariety of $\PP^2$, $V(u)=\{(u:v:w)\ : \ u=0\}$ and we project from this closed subvariety of the incidence variety to the space $\P1\times \P1$ corresponding to the second two copies (because the first one would be $(0:0)$).
If we compute the implicit equation by the method before, for the map 
\[
 \begin{array}{rccc} \phi|_{(u=0)}:(u=0)\cong & \P1 & \dto & \P1\times \P1\ \\
 & (v:w) & \mapsto & (f_1:g_1)\times(f_2:g_2),
 \end{array}
\]
we get as output: $Mv:= -v^4\ (x1 - y1)^2$ and $Mw:= x^4\ (x1 - y1)^2 x2^2$. Observe that the $x_2^2$ appearing in the second equation is still the extra factor coming from the big fibre over the point $q$ in the closed set $V(u)$. We see here that the dimension of the fibre is not constant on $V(u)$, but on a relative open set where it defines a fibre bundle $\EEE$. 
\end{exmp}


\section{Applications to the computation of sparse discriminants}

The computation of sparse discriminants is equivalent to the implicitization problem for a parametric variety, to which we can apply the techniques developed in the previous sections. In the situation described in \cite{CD}, a rational map $f: \CC^n \dto \CC^n$ given by homogeneous rational functions of total degree zero is associated to an integer matrix $B$ of full rank. This is done in such a way that the corresponding implicit equation is a dehomogenization of a sparse discriminant of generic polynomials with exponents in
a Gale dual of $B$.

\medskip

Suppose for instance that we take the matrix $B$ below:
\[
 B=\left( \begin{array}{rrr}1&0&0\\-2&1&0\\1&-2&1\\0&1&-2\\0&0&1\end{array}\right).
\]
In this case, as the colums of $B$ generate the affine relations of the lattice points $\{0,1,2,3,4\}$. The closed image of the parametrization $f$ is a dehomogenization of the classical discriminant of a generic univariate polynomial of degree $4$. Explicitly, from the matrix we get the linear forms $l_1(u,v,w)=u, \ l_2(u,v,w)=-2u+v, \ l_3(u,v,w)=u-2v+w, \ l_4(u,v,w)=v-2w, \ l_5(u,v,w)=w$ (whose coefficients are read in the rows of $B$), and the polynomials $f_0=l_1\cdot l_3, \ g_0=l_2^2, \ f_1=l_2\cdot l_4, \ g_1=l_3^2, \ f_2=l_3\cdot l_5, \ g_2=l_4^2$ (the exponents of the linear
forms are read from the columns of $B$) . This construction gives rise to the following rational map: 
\[
 \begin{array}{rcl} f: \CC^3 & \dto & \CC^3 \\
 (u,v,w) & \mapsto & (\frac{u(u-2v+w)}{(-2u+v)^2},\frac{(-2u+v)(v-2w)}{(u-2v+w)^2},\frac{(u-2v+w)w}{(v-2w)^2}).
 \end{array}
\]

First, we see that we can get a map from $\PP^2_\CC$ because of the homogeneity of the polynomials. Also, taking common denominator, we can have a map to $\PP^3_\CC$, this is:
\[
 \begin{array}{rcl} f: \PP^2_\CC & \dto & \PP^3_\CC\\
 (u:v:w) & \mapsto & (f_0:f_1 :f_2 :f_3).
 \end{array}
\]
where $f_0= (-2u+v)^2(u-2v+w)^2(v-2w)^2$ is the common denominator, $f_1= u(u-2v+w)^3(v-2w)^2$, $f_2 =(-2u+v)^3(v-2w)^3 $ and $f_3=(u-2v+w)w(-2u+v)^2(u-2v+w)^2$.

The problem with this way of projectivizing is that, in general, we cannot implement the theory developed by L. Bus\'e, M. Chardin, and J-P. Jouanolou, because typically the base locus has awful properties, as a consequence of taking common denominator.

As a possible way out, we propose in this work to consider the morphism of projective schemes given by:
\[
 \begin{array}{rcl} \phi:\p2 & \dto &\P1 \times \P1 \times\P1\\
 (u:v:w) & \mapsto & (f_0:g_0)\times(f_1:g_1)\times(f_2:g_2).
 \end{array}
\]
where $f_0=u(u-2v+w)$, $g_0=(-2u+v)^2$, $f_1=(-2u+v)(v-2w)$, $g_1=(u-2v+w)^2$, $f_2=(u-2v+w)w$ $g_2=(v-2w)^2$.
For this particular example, we get that there are only two base points giving rise to an extra factor, namely $p=(1:2:3)$ and $q=(3:2:1)$. Is easy to see that those points give rise to two linear factors in the resultant.

\medskip

First, we observe that this situation is better, because we are not adding common zeroes. Moreover, if a point $(u:v:w)$ is a base point here, it also is in the two settings above.

Remember also that in the $n=2$ case, the condition required on the Koszul complex associated to this map for being acyclic is that the variety $X$, defined as the common zeroes of all the $6$ polynomials, be empty. In general, the conditions we should check are the ones imposed by the Avramov's theorem, as was shown in Theorem \ref{teoRes}. 

Note also that if we want to state this situation in the language of approximation complexes, we need only to replace $\k.$ by $\Z.$, because we can assume that $\{\ffi\}$ are regular sequences, due to the fact that $\gcd(\ffi)=1$.

\medskip

\begin{rem}
For a matrix like the $B$ above, it is clear that the closed subvariety $X$ is always empty, due to the fact that all maximal minors of $B$ are not zero, and the polynomials $g_i$'s involve independent conditions. Then, the only common solution to $l_2^2=l_3^2=l_4^2=0$ is $(u,v,w)=(0,0,0)$,  and so $X=\emptyset$ in $\p2$.
In this case, it is still better (from an algorithmic approach) to compute the discriminant of a generic polynomial of degree $4$ in a single variable and then dehomogenize, because, in our  setting, the number of variables is bigger than $1$. But when the
number of monomials of a sparse polynomial in many variables is not big, this Gale dual approach for the computation of
sparse discriminants provides a good alternative.
\end{rem}

We will give next an example where we show a more complicated case.

\medskip

\begin{exmp}
Let $C$ be the matrix given by 
\[
 C=\left( \begin{array}{rrr}1&-7&-6\\-1&4&3\\1&0&4\\0&1&-1\\-1&2&0\end{array}\right).
\]
As before, denoting by $b_i$ the $i$-th row of $C$, we get the linear forms 
$l_i(u,v,w)=\gen{b_i,(u,v,w)},$ associated to the row vectors $b_i$ of $B$,
where $\gen{,}$ stands for the inner product in $\CC^3$. Then we define the homogeneous polynomials $f_0=l_1\cdot l_3, \ g_0=l_2\cdot l_5, \ f_1=l_2^4\cdot l_4\cdot l_5^2, \ g_1=l_1^7, \ f_2=l_2^3\cdot l_3^4, \ f_2=l_1^6\cdot l_4$. And we obtain the following rational map:
\[
\begin{array}{rcl}\phi:\p2 & \dto & \P1 \times \P1 \times\P1 \\
 (u:v:w) & \mapsto & (f_0:g_0)\times(f_1:g_1)\times(f_2:g_2).
\end{array}
\]

It is easy to see that in this case the variety $X$ is not empty, for instance the point $p=(1:1:-1)$, defined by $l_1=l_2=0$ belongs to $X$.

As was shown by M. A. Cueto and A. Dickenstein in \cite[Lemma 3.1 and Thm. 3.4]{CD}, we can interpret the discriminant computed from the matrix $C$ in terms of the dehomogenized discriminant associated to any matrix of the form $C \cdot M$, where $M$ is a square invertible matrix with integer coefficients. That is, we are allowed to do operations on the columns of the matrix $C$, and still be able to compute the desired discriminant in terms of the matrix obtained from $C$. In \cite{CD} they give an explicit formula for this passage. 

In this particular case, we can multiply $C$ from the right by a determinant $1$ matrix $M$, obtaining
{\small \[
C\cdot M=\left( \begin{array}{rrr}1&-7&-6\\-1&4&3\\1&0&4\\0&1&-1\\-1&2&0\end{array}\right) \cdot \left( \begin{array}{rrr}1&12&-1\\0&6&-1\\0&5&1\end{array}\right)=\left( \begin{array}{rrr}1&0&0\\-1&-3&0\\1&-8&3\\0&11&-2\\-1&0&-1\end{array}\right). 
\]}

Similar to what we have done before, we can see that the closed subvariety $X$ associated to the rational map that we obtain from the matrix $C\cdot M$ is empty. Observe that $\#V(I_2)$ is finite due to the fact that $l_2=l_4=0$ or $l_3=l_4=0$ or $l_3=l_5=0$ should hold.  Moreover it is easy to verify that all maximal minors are nonzero, and this condition implies that any of the previous conditions define a codimension $2$ variety, this is, a finite one. A similar procedure works for seeing see that $\codim_A(I_3)\geq 2$. Finally the first part of Theorem \ref{teoRes} implies that the Koszul complex $\k.$ is acyclic and so we can compute the Macaulay resultant as its determinant. 

Moreover, this property over the minors implies that $\codim_A(I^{(i_0)})=2>k+1=1$ and that $\codim_A(I^{(i_0)}+I^{(i_1)})=3>k+1=2$. So, the second part of Theorem \ref{teoRes} tells us that the determinant of the Koszul complex $\k.$ in degree greater than $(2+8+3)-3=10$ determines exactly the implicit equation of the scheme theoretic image of $\phi$. Observe that, as was shown in \cite[Thm. 2.5]{CD}, for this map, we have that $\deg(\phi)=1$.
\end{exmp}

We remark that the process implemented for triangulating the matrix $C$ via $M$ is not algorithmic for the moment. 

\medskip

\subsection*{Acknowledgements\markboth{Acknowledgements}{Acknowledgements}}
 I would like to thank Laurent Bus\'e, and my two advisors: Marc Chardin and Alicia Dickenstein, for the very useful discussions, ideas and suggestions. Also to the Galaad group at INRIA, for their hospitality. Finally I would like to thank the reviewer for his huge dedication and many corrections to this article.

\medskip

\def\cprime{$'$}

\end{document}